\renewcommand{\Re}{\operatorname{Re}}
\renewcommand{\(}{\left\(}
\renewcommand{\)}{\right\)}
\renewcommand{\[}{\left\[}
\renewcommand{\]}{\right\]}
\numberwithin{equation}{section}
 \theoremstyle{plain}
\newtheorem{theorem}{Theorem}[section]
\newtheorem{lemma}[theorem]{Lemma}
\newtheorem{remark}[]{Remark}
\newtheorem{corollary}[theorem]{Corollary}
\def\proof{\@ifnextchar[{\@oproof}{\@nproof}}
\def\@oproof[#1][#2]{\trivlist\item[\hskip\labelsep\textit{#2 Proof of\
#1.}~]\ignorespaces}
\def\@nproof{\trivlist\item[\hskip\labelsep\textit{Proof.}~]\ignorespaces}
\begin{document}
\title[A Number Field Analogue of Ramanujan's identity for $\zeta(2m+1)$]{A Number Field Analogue of Ramanujan's identity for $\zeta(2m+1)$} 

\author{Diksha Rani Bansal}
\address{Diksha Rani\\ Department of Mathematics \\
Indian Institute of Technology Indore \\
Indore, Simrol, Madhya Pradesh 453552, India.} 
\email{dikshaba1233@gmail.com,  mscphd2207141001@iiti.ac.in}

\author{Bibekananda Maji}
\address{Bibekananda Maji\\ Discipline of Mathematics \\
Indian Institute of Technology Indore \\
Indore, Simrol, Madhya Pradesh 453552, India.} 
\email{bibek10iitb@gmail.com, bmaji@iiti.ac.in}

\thanks{2020 \textit{Mathematics Subject Classification.} Primary 11M06,  11R42; Secondary 11R29.\\
\textit{Keywords and phrases.} Eisenstein series,  Riemann zeta function,  Odd zeta values,  Dedekind zeta function,  Ramanujan's formula,  Class number.}

\maketitle
\begin{center}
\emph{Dedicated to Professor Bruce Berndt on the occasion of his 85th birthday}
\end{center}

\begin{abstract}
Ramanujan's famous formula for $\zeta(2m+1)$ has captivated the attention of numerous mathematicians over the years.  Grosswald, in 1972, found a simple extension of Ramanujan's formula which in turn gives transformation formula for Eisenstein series over the full modular group. Recently, Banerjee, Gupta and Kumar found a number field analogue of Ramanujan's formula. 
In this paper, we present a new number field analogue of the Ramanujan-Grosswald formula for $\zeta(2m+1)$ by obtaining a formula for Dedekind zeta function at odd arguments.
We also obtain a number field analogue of an identity of Chandrasekharan and Narasimhan,  which played a crucial role in proving our main identity.  As an application,  we generalize transformation formula for Eisenstein series $G_{2k}(z)$ and Dedekind eta function $\eta(z)$.  A new formula for the class number of a totally real number field is also obtained,  which provides a connection with the Kronceker's limit formula for the Dedekind zeta function.

%
%
%
\end{abstract}

\section{Introduction}
The theory of the Riemann zeta function $\zeta(s)$ is a central object of study in number theory that holds a substantial place in the mathematical landscape. The nature of special values of $\zeta(s)$ has a rich history.  Euler gave an exact evaluation for $\zeta(2k)$ in 1734, which establishes a relation between Bernoulli numbers and even zeta values.  More precisely, for every positive integer $k$, 
\begin{align}\label{Euler's formula for even zeta values}
\zeta(2k) = (-1)^{k+1}\frac{(2\pi)^{2k}B_{2k}}{2(2k)!},
\end{align}
where $B_{2k}$ denotes $2k$th Bernoulli number.  
The above formula instantly implies that even zeta values are transcendental. 
However,  we have very little information about algebraic nature of positive odd zeta values.   In 1979, Apery \cite{Apery79, Apery2} achieved a breakthrough by proving the irrationality of $\zeta(3)$. In 2001, Rivoal \cite{rivoal}, Ball and Rivoal \cite{ballrivoal} proved that there exist infinitely many odd zeta values which are irrational. A result due to Zudilin \cite{zudilin}, states that at least one of $\zeta(5), \zeta(7), \zeta(9)$ and $\zeta(11)$ are irrational, which is the most notable achievement in this area as of now.
Prior to all these results,  Ramanujan in his Notebook  \cite[p.~173,  Ch.~14,  Entry 21(i)]{rn2}  as well as in Lost Notebook \cite[p.~319,  Entry (28)]{lnb} gave an intriguing formula involving odd zeta values, that has drawn the attention of many mathematicians. 
For any $\alpha, \beta > 0$ with $\alpha\beta = \pi^2, k \in \mathbb{Z} \setminus \{0\}$, we have 
\begin{align}\label{Ramanujan's Formula}
H_{2k+1}(\alpha) + (-1)^{k+1} H_{2k+1}(\beta) = \sum^{k+1}_{j=0}(-1)^{j-1}\frac{B_{2j}}{(2j)!}\frac{B_{2k + 2 - 2j}}{(2k + 2 - 2j)!}\alpha^{k+1-j}\beta^j,
\end{align}
where $$H_{2k+1}(x) = (4x)^{-k} \left( \frac{1}{2}\zeta(2k+1) + \sum_{n=1}^\infty \sigma_{-(2k+1)}(n)e^{-2xn} \right),$$ 
and the generalized divisor function $\sigma_{z}(n) = \sum_{d|n}d^{z}, z \in \mathbb{C}.$

Over the years, numerous mathematicians have generalized Ramanujan's formula in a variety of ways.  Ramanujan \cite[Ch.~14,  Entry 8(iii)]{rn2} himself gave a huge generalization of \eqref{Ramanujan's Formula}.  An analogue of Ramanujan's formula \eqref{Ramanujan's Formula} for $L$-functions associated to modular forms was discussed by Razar \cite[Theorem 2]{Razar} and Weil \cite{Weil} independently.   An extension for Dirichlet $L$-function,  Lerch zeta function,  and more generally for any Dirichlet series with periodic coefficients was given by  Bradley \cite{Bradley} in 2002.  Quite surprisingly,  in 1977,  Berndt \cite{berndt3} showed that Euler's formula \eqref{Euler's formula for even zeta values} and Ramanujan's formula \eqref{Ramanujan's Formula} are branches of a bigger tree,  that is,  they can be obtained from a single transformation formula for a generalized Eisenstien series.  Recently,  Dixit and the second author \cite{DM20} found an interesting one variable generalization of  \eqref{Ramanujan's Formula},  and later Dixit et.  al.  \cite{DGKM20} and Chavan \cite{Cha23} established different generalizations of  \eqref{Ramanujan's Formula} for the Hurwitz zeta function. 
To know more detailed information about the Ramanujan's formula \eqref{Ramanujan's Formula},  we refer to \cite[p.~276]{BCB-II} and a survey article by Berndt and Straub \cite{berndtstraubzeta},  and an expository paper by Dixit \cite{Dixit24} where one can find recent developments.  
Readers are also encouraged to see \cite{berndt1, berndt2,  Chavan,   CJM23,  GMR2011,  GM22,  GJKM24}.  


Ramanujan's formula \eqref{Ramanujan's Formula} exhibits a profound correlation with Eisenstein series.
Let $\mathbb{H}$ be the upper half plane.  For $z \in \mathbb{H}$ and an integer $k \geq 2$,
we define the holomorphic Eisenstein series $G_{2k}(z)$ of weight $2k$ for the full modular group $\textrm{SL}_2(\mathbb{Z})$,  
\begin{align*}
G_{2k}(z) = \sum_{(m, n) \in \mathbb{Z}^2 \setminus \{(0, 0)\}}\frac{1}{(m+n z)^{2k}}.
\end{align*}
For $a, b, c, d \in \mathbb{Z}$ with $ad-bc = 1$,  it satisfies the following modular transformation:
\begin{align}\label{trans_G_2k}
G_{2k}\left(\frac{a z + b}{c z + d}\right) = (c z + d)^{2k}G_{2k}( z ).
\end{align} 
Then the Fourier series expansion of $G_{2k}(z)$ is given by 
\begin{align*}
G_{2k}(z) = 2\zeta(2k)\left( 1 + \frac{2}{\zeta(1-2k)} \sum_{n=1}^{\infty} \sigma_{2k-1}(n)e^{2 \pi i n z}\right). 
\end{align*}
 In particular,  $G_{2k}(z)$ satisfies the following two main transformation formulae,  
\begin{align}
G_{2k}(z+1) &= G_{2k}(z), \label{Transformation1}\\
G_{2k}\left(-\frac{1}{z}\right) &= z^{2k}G_{2k}(z), \label{Transformation2}
\end{align}
which yield \eqref{trans_G_2k}.  
We now introduce a simple extension of \eqref{Ramanujan's Formula} given by Grosswald \cite{Grosswald},  in 1972, which shows how the above transformation formula \eqref{Transformation2} of $G_{2k}(z)$ is connected to Ramanujan's identity.  It states that for $z \in \mathbb{H}$, $k \in \mathbb{Z} \setminus \{0\}$,  
\begin{align}\label{Grosswald identity}
F_{2k+1}(z) - z^{2k}F_{2k+1}\left(-\frac{1}{z}\right) = \frac{1}{2}\zeta(2k+1)(z^{2k} - 1) + \frac{(2\pi i)^{2k+1}}{2z}R_{2k+1}(z),
\end{align}
where 
\begin{align}\label{infinite series_Grosswald}
F_k(z) = \sum_{n=1}^{\infty}\sigma_{-k}(n)e^{2\pi inz}
\end{align}
and $R_{2k+1}(z)$ is the Ramanujan polynomial,  introduced by Gun, Murty and Rath \cite{GMR2011},  defined as
\begin{align}\label{Ramanujan polynomial}
R_{2k+1}(z) = \sum_{j=0}^{k+1}z^{2k+2-2j}\frac{B_{2j}}{(2j)!}\frac{B_{2k + 2 - 2j}}{(2k + 2 - 2j)!}.
\end{align}
More about the above polynomial \eqref{Ramanujan polynomial} can be seen in the paper by Murty, Smyth and Wang \cite{MSW2011}.
Setting $z= i\beta/\pi$, $\alpha\beta = \pi^2,$ with $\alpha, \beta > 0$, Grosswald's identity immediately gives Ramanujan's formula \eqref{Ramanujan's Formula}. If $k < -1$ in \eqref{Grosswald identity}, then
\begin{align}\label{Grosswald's id without finite sum}
F_{2k+1}(z) - z^{2k}F_{2k+1}\left(-\frac{1}{z}\right) = \frac{1}{2}\zeta(2k+1)(z^{2k} - 1).
\end{align}
One can easily check that the above formula is nothing but the transformation formula \eqref{Transformation2} of the Eisenstien series $G_{2k}(z)$.  We further note that the above formula is equivalent to an identity of Ramanujan \cite[p.~261,  Entry 13]{BCB-II}.

In the literature,  there are several generalizations of $\zeta(s)$,  however,  the Dedekind zeta function is considered as one of the most noteworthy generalizations of $\zeta(s)$ to the number fields.  In this paper,  our main aim is to establish a new generalization of Ramanujan's formula \eqref{Ramanujan's Formula} for Dedekind zeta function.  Before delving deeper, we first define the Dedekind zeta function.

\subsubsection{Dedekind zeta function}
Let $\mathbb{F}$ be a number field with the degree $[\mathbb{F} : \mathbb{Q}] = d = r_1 + 2r_2$, where 
$r_1$ and $r_2$ denote the number of real and complex embeddings (upto conjugates) of $\mathbb{F}$. Let $D$ be the absolute value of the discriminant $D_\mathbb{F}$ of $\mathbb{F}$.  Let $\mathcal{O}_{\mathbb{F}}$ be the ring of integers and $\mathfrak{N}$ be the norm map of $\mathbb{F}$ over $\mathbb{Q}$.  Let $\mathtt{a}_{\mathbb{F}}(n)$ be the number of ideals in $\mathcal{O}_{\mathbb{F}}$ with norm $n$. Then the Dedekind zeta function associated to  $\mathbb{F}$ is defined by the following Dirichlet series:
\begin{align}\label{Dedekind}
\zeta_\mathbb{F}(s) = \sum_{\mathfrak{a} \subset \mathcal{O}_{\mathbb{F}}} \frac{1}{\mathfrak{N}(\mathfrak{a})^s} = \sum_{n=1}^{\infty}\frac{\mathtt{a}_{\mathbb{F}}(n)}{n^s},  \quad \Re(s)>1,
\end{align}
where $\mathfrak{a}$ runs over the non-zero integral ideals of $\mathcal{O}_{\mathbb{F}}$. 
It is well known that $\zeta_{\mathbb{F}}(s)$ has a simple pole at $s=1$ and
the residue is given by the class number formula: 
\begin{align}\label{Class number formula}
\lim_{s \rightarrow 1}(s-1)\zeta_{\mathbb{F}}(s) = \frac{2^{r_1}(2\pi)^{r_2}}{\sqrt{D}}\frac{R_{\mathbb{F}}h_{\mathbb{F}}}{w_{\mathbb{F}} } :=H_{ \mathbb{F}},
\end{align}
where $w_{\mathbb{F}}$ denotes the number of roots of unity contained in $\mathcal{O}_\mathbb{F}, h_{\mathbb{F}}$ is the class number, and $R_{\mathbb{F}}$ denotes the regulator of $\mathbb{F}$.  Throughout the paper,  for simplicity,  we denote the above residue as $H_{ \mathbb{F}}$.  Further,  we know that $r=r_1+r_2-1$  is the rank of the unit group of $\mathbb{F}$ and $\zeta_{\mathbb{F}}(s)$ has a zero of order $r$ at $s=0$ with 
\begin{align}\label{Laurent series_at s=0_1st coeff}
\lim_{s\rightarrow 0} \frac{\zeta_{\mathbb{F}}(s)}{s^r} = - \frac{R_\mathbb{F} h_\mathbb{F}}{w_\mathbb{F}} :=C_{\mathbb{F}}.  
\end{align}
Note that the constants $H_{\mathbb{F}}$ and $C_{\mathbb{F}}$ are related by the relation $\sqrt{D} H_{\mathbb{F}}=- 2^{r_1}(2\pi)^{r_2} C_{\mathbb{F}}$.  
Now we define a \emph{generalized divisor function} attached to a given number field $\mathbb{F}$,  which was recently studied by Gupta and Pandit \cite[Equation (1.5)]{GP2021}:
\begin{align}\label{Gupta_Pandit}
\sigma_{\mathbb{F}, \ell}(n) = \sum_{d|n}\mathtt{a}_{\mathbb{F}}(d)\mathtt{a}_{\mathbb{F}}\left(\frac{n}{d}\right)d^\ell.
\end{align}
In their paper, they investigated Riesz sum associated to $\sigma_{\mathbb{F},\ell}(n)$. 
One can check that the Dirichlet series associated to $\sigma_{\mathbb{F},  \ell}(n)$ is given by
\begin{align}\label{Dirichlet series for generalized divisor function}
\sum_{n=1}^{\infty}\frac{\sigma_{\mathbb{F}, \ell}(n)}{n^s} = \zeta_{\mathbb{F}}(s)\zeta_{\mathbb{F}}(s- \ell), \quad \Re(s) > \max\{1, 1+\Re(\ell)\}.
\end{align}
A number field analogue of Euler's identity \eqref{Euler's formula for even zeta values} has been given by Klingen \cite{Klingen} and Siegel \cite{Siegel}.   It states that for any totally real number field $\mathbb{F}$ of degree $n$, we have
\begin{align}\label{Dedekind zeta at even numbers for real fields}
\zeta_{\mathbb{F}}(2m) = \frac{q_m \pi^{2mn}}{\sqrt{D}}, \quad m \in \mathbb{N},
\end{align}
where $q_m$ is some fixed non-zero rational number. From the above identity, we notice that like $\zeta(2m)$, even zeta values over totally real number fields are also transcendental. Recently,  Murty and Pathak \cite{MP2021} have also studied the arithmetic nature of Dedekind zeta function at odd positive integers. Their result states that for a given integer $n \geq 1$, at most one of $\zeta_{\mathbb{F}}(2n+1)$ is rational when $\mathbb{F}$ runs over all imaginary quadratic fields. 
Recently, Banerjee, Gupta and Kumar \cite{BGK23} have studied transformation formula for Dedekind zeta function at odd integers which gives a generalization of \eqref{Ramanujan's Formula}.   However,  our generalization is different from their result.  


Now we discuss the Steen function before mentioning the main result.
The Steen function $V(z| a_1, a_2, \ldots, a_n)$ is defined by 
\begin{align}\label{Voronoi Steen function}
V(z| a_1, a_2, \ldots, a_n) = \frac{1}{2\pi i}\int_{(c)}\prod_{j=1}^{n}\Gamma(s + a_j)z^{-s}ds.
\end{align}
Here and throughout the article the symbol $(c)$ denotes the vertical line from $c - i\infty$ to $c + i\infty$.  We assume that all the poles of $\Gamma(s + a_j)$ lie on one side of the vertical line $(c)$.  One can check that this function is a particular case of Meijer $G$-function.  
Special cases of Steen function are associated to many other well-known functions such as,  
\begin{align}
V(z| 0) &= e^{-z},  ~~\textrm{if}~~ c>0,  \label{Voronoi-exp} \\
V(z| a, b) &= 2z^{\frac{1}{2}(a+b)}K_{a-b}(2z^{\frac{1}{2}}),  ~~\textrm{if}~~ c> \max\{-a,  -b\},\label{Bessel fn formula}
\end{align}
where $K_{\nu}$ denotes the modified Bessel function of the second kind.  Further information about Steen function can be found in \cite{Steen},  \cite[p.~63]{KT}.  

Recall that Ramanujan's formula \eqref{Ramanujan's Formula} as well as Grosswald's identity \eqref{Grosswald identity} has an infinite series \eqref{infinite series_Grosswald} containing the divisor function $\sigma_{z}(n)$ and the exponential function,  that is,  for $k \in \mathbb{Z},  z \in \mathbb{H}$,  
\begin{align} \label{series_Grosswald}
F_k(z) = \sum_{n=1}^{\infty}\sigma_{-k}(n)e^{2\pi inz}.
\end{align}
In the current paper, we are interested  to study transformation formula for the following infinite series:
\begin{align}\label{Infinite series F}
\mathfrak{F}_{\mathbb{F}, k}(z) := \sum_{n=1}^{\infty}  \sigma_{\mathbb{F}, -k}(n)V\left(-\frac{(2\pi)^d niz}{D} \bigg| \bar{0}_d \right).
\end{align}
One can easily check that the above series \eqref{Infinite series F} reduces to \eqref{series_Grosswald} when $\mathbb{F} = \mathbb{Q}$.  Now we are ready to state the main results of this paper in the next section.

\section{Main Results}
\begin{theorem}\label{DB}
Let $\mathbb{F}$ be a number field of degree $d=r_1+2r_2$ and $\zeta_{\mathbb{F}}(s)$ be the Dedekind zeta function defined in \eqref{Dedekind}.  We consider $r =r_1+r_2-1$.  For any non-zero integer $k$,  
we define 
\begin{align}\label{Lamda_F_k}
\Lambda_{\mathbb{F}, k}(s) := \Gamma(s)^d\zeta_{\mathbb{F}}(s)\zeta_{\mathbb{F}}(s+2k+1)\left(\frac{(2\pi)^d}{D}\right)^{-s}.
\end{align}
Let $\mathfrak{F}_{\mathbb{F}, k}(z)$ be the infinite series defined as in \eqref{Infinite series F}. Then for any $z \in \mathbb{H}$ and 
 $k >0$,  we have 
\begin{align}
\mathfrak{S}_{\mathbb{F},  2k+1}(z) & = (-1)^{k(r_1+1)+r_2}z^{2k} \mathfrak{S}_{\mathbb{F},  2k+1} \left(- \frac{1}{z}\right)   
 +  {\displaystyle \sum_{j=1}^k  \mathfrak{R}_{-(2j-1)}(z) + \sum_{j=1}^{k-1} \mathfrak{R}_{-2j}(z)},  
\end{align}
where 
\begin{align*}
\mathfrak{S}_{\mathbb{F},  2k+1}(z):=  \mathfrak{F}_{\mathbb{F}, 2k+1}(z) - \mathfrak{R}_{0}(z)- \mathfrak{R}_1(z), 
\end{align*}
and the residual terms are defined as 
{\allowdisplaybreaks \begin{align*}
\mathfrak{R}_{0}(z) &  =  \frac{1}{(r_2)!}\lim_{s \rightarrow 0} \frac{{\rm d}^{r_2}}{{\rm d}s^{r_2}}\left(s^{r_2+1}    \Lambda_{\mathbb{F},k}(s) (- i z) ^{-s}\right),     \nonumber \\
 \mathfrak{R}_1(z)  & = H_{\mathbb{F} }  \zeta_{\mathbb{F}}(2k+2)  \frac{ i D  }{ (2\pi)^d z},  \\
   \mathfrak{R}_{-(2j-1)}(z) & = \frac{1}{(r)!}\lim_{s \rightarrow -(2j-1)} \frac{{\rm d}^{r}}{{\rm d}s^{r}}\left((s+2j-1)^{r+1}\Lambda_{\mathbb{F}, k}(s) (- iz)^{-s}\right),   \\
   \mathfrak{R}_{-2j}(z) & =  \frac{1}{(r_2-1)!}\lim_{s \rightarrow -2j} \frac{{\rm d}^{r_2-1}}{{\rm d}s^{r_2-1}}\left((s+2j)^{r_2} \Lambda_{\mathbb{F}, k}(s) (- i z)^{-s}\right).
\end{align*}}
Again,  for $k <0$,   we have 
\begin{align}\label{for k<0}
\mathfrak{U}_{\mathbb{F},  2k+1}(z) & = (-1)^{k(r_1+1)+r_2}z^{2k} \mathfrak{U}_{\mathbb{F},  2k+1} \left(- \frac{1}{z}\right)   + \mathfrak{R}(z),
\end{align}
where
\begin{align*}
\mathfrak{U}_{\mathbb{F},  2k+1}(z) :=  \mathfrak{F}_{\mathbb{F}, 2k+1}(z)  -    \frac{ C_{\mathbb{F}}  \zeta_{\mathbb{F}}^{(r_2)}(2k+1)}{(r_2)!},
\end{align*}
and 
 \begin{align}\label{Residue at 1 for k negative}
 \mathfrak{R}(z)=  \begin{cases}
 -\frac{i}{4\pi z},  & \quad \text{if} \, \, \, (k,r_1,  r_2) = (-1,1,  0), \\
  \frac{ H_{\mathbb{F} }  \zeta_{\mathbb{F}}(0) D i }{ (2\pi)^2 z} ,  & \quad \text{if} \, \, \, (k,r_1,  r_2) = (-1,0,  1), \\
0,  & \quad \text{otherwise.}
\end{cases}
\end{align}

\end{theorem}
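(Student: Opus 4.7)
The plan is to reduce the identity to a Mellin--Barnes contour shift for $\mathfrak{F}_{\mathbb{F}, 2k+1}(z)$, combined with the functional equation of $\zeta_{\mathbb{F}}$ packaged as the number field Chandrasekharan--Narasimhan identity established earlier in the paper. First I would express $\mathfrak{F}_{\mathbb{F}, 2k+1}(z)$ as an inverse Mellin transform: substituting the definition \eqref{Voronoi Steen function} of the Steen function into \eqref{Infinite series F}, interchanging sum and integral (justified for $\Re(s)$ large by absolute convergence of the Dirichlet series), and applying \eqref{Dirichlet series for generalized divisor function} yields
\begin{align*}
\mathfrak{F}_{\mathbb{F}, 2k+1}(z) = \frac{1}{2\pi i}\int_{(c_0)} \Lambda_{\mathbb{F}, k}(s) (-iz)^{-s}\, ds
\end{align*}
for $c_0 > \max\{1, 2k+2\}$, with $\Lambda_{\mathbb{F}, k}$ as in \eqref{Lamda_F_k}.

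Next I would invoke the Chandrasekharan--Narasimhan analogue. The product $\zeta_{\mathbb{F}}(s)\zeta_{\mathbb{F}}(s+2k+1)$ is essentially symmetric under $s\mapsto -2k-s$ (reflection about $s=-k$) once the $\Gamma$-reflection formula is applied to the gamma factors coming from the functional equation of $\zeta_{\mathbb{F}}$; the CN analogue packages this symmetry and yields
\begin{align*}
\frac{1}{2\pi i}\int_{(c')} \Lambda_{\mathbb{F}, k}(s)(-iz)^{-s}\, ds = (-1)^{k(r_1+1)+r_2}\, z^{2k}\, \mathfrak{F}_{\mathbb{F}, 2k+1}(-1/z)
\end{align*}
for $c'$ sufficiently to the left of all poles. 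The sign $(-1)^{k(r_1+1)+r_2}$ comes from the $\cos(\pi s/2)^{r_1+r_2}\sin(\pi s/2)^{r_2}$ factors produced by the $\Gamma$-reflection, evaluated at integer shifts---an expression that requires a clean simplification combining the contributions of the $r_1$ real and $r_2$ complex places.

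The core step is then to shift the contour from $(c_0)$ to $(c')$, picking up the residues of $\Lambda_{\mathbb{F}, k}(s)(-iz)^{-s}$ at each pole in between. For $k>0$ these occur at $s = 1, 0, -1, -2, \ldots, -(2k-1)$, with pole orders determined by combining the order-$d$ pole of $\Gamma(s)^d$ with the trivial zeros of $\zeta_{\mathbb{F}}(s)$---of order $r_2$ at odd negative integers and order $r_1+r_2$ at even negative integers, coming from the poles of $\Gamma(s/2)^{r_1}\Gamma(s)^{r_2}$ in the completed zeta function---together with the zero of order $r=r_1+r_2-1$ at $s=0$ and the simple pole of $\zeta_{\mathbb{F}}(s)$ at $s=1$. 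This produces a pole of order $r_2+1$ at $s=0$, simple at $s=1$, of order $r+1$ at each $s=-(2j-1)$ (where $\zeta_{\mathbb{F}}(s+2k+1) = \zeta_{\mathbb{F}}(2(k-j+1))$ is regular and nonzero), and of order $r_2$ at each $s=-2j$. Extracting the residues by the Laurent-coefficient formula gives exactly $\mathfrak{R}_0, \mathfrak{R}_1, \mathfrak{R}_{-(2j-1)}, \mathfrak{R}_{-2j}$. Absorbing $\mathfrak{R}_0(z)+\mathfrak{R}_1(z)$ into $\mathfrak{S}_{\mathbb{F}, 2k+1}(z)$ on the left (and the corresponding residues at the mirror positions $s=-2k,-(2k+1)$, which under the CN identity are just $(-1)^{k(r_1+1)+r_2}z^{2k}$ times $\mathfrak{R}_0(-1/z)+\mathfrak{R}_1(-1/z)$, into $\mathfrak{S}_{\mathbb{F}, 2k+1}(-1/z)$ on the right) yields the stated identity.

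For $k<0$ the same strategy applies; now $\zeta_{\mathbb{F}}(s+2k+1)$ has its pole at $s=-2k>0$, but for $k\le -2$ all potential intermediate integer poles cancel against trivial zeros, leaving only the residue at $s=0$ which evaluates to $C_{\mathbb{F}}\zeta_{\mathbb{F}}^{(r_2)}(2k+1)/r_2!$ (this defines $\mathfrak{U}_{\mathbb{F},2k+1}$). The case $k=-1$ is exceptional because the fixed point $s=1$ of the reflection $s\mapsto 2-s$ coincides with the pole of $\zeta_{\mathbb{F}}(s)$; the associated simple pole survives the cancellation precisely when $r=r_1+r_2-1=0$, i.e.\ for $\mathbb{F}=\mathbb{Q}$ or $\mathbb{F}$ imaginary quadratic, producing $\mathfrak{R}(z)$ as in \eqref{Residue at 1 for k negative}. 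The main obstacle will be the careful pole-order bookkeeping at each integer, where three factors with nontrivial zeros or poles interact, together with verifying that the trigonometric sign produced by the functional equation simplifies uniformly to $(-1)^{k(r_1+1)+r_2}$ across all cases.
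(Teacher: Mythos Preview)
Your proposal is correct and follows essentially the same route as the paper: express $\mathfrak{F}_{\mathbb{F},2k+1}$ as a vertical Mellin--Barnes integral of $\Lambda_{\mathbb{F},k}(s)(-iz)^{-s}$, shift the contour leftward across the strip, invoke Lemma~\ref{Functional Equation for Product of Zeta Functions} (the Chandrasekharan--Narasimhan analogue) to turn the shifted integral into $(-1)^{k(r_1+1)+r_2}z^{2k}\mathfrak{F}_{\mathbb{F},2k+1}(-1/z)$, and collect residues with the pole orders you list, using the same symmetry to pair the residues at $s=-2k,\,-(2k+1)$ with those at $s=0,\,1$. The only cosmetic difference is that the paper first works with a real parameter $y>0$ and substitutes $y=-iz$ at the end, whereas you carry $-iz$ throughout; your convergence abscissa $c_0>\max\{1,2k+2\}$ is larger than the paper's $c>\max\{1,-2k\}$ but harmless.
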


\begin{remark}
We note that an explicit evaluation of the terms $\mathfrak{R}_0(z),  \mathfrak{R}_{-(2j-1)}(z)$ and $\mathfrak{R}_{-2j}(z)$ is not easy as it involves higher derivatives.  However,  we can say that $\mathfrak{R}_0(z)$ is a polynomial in $\mathbb{C}[\log(z)]$ of degree $r_2$,  whereas  $\mathfrak{R}_{-(2j-1)}(z)$ and $\mathfrak{R}_{-2j}(z)$ are polynomials of the form $z^{2j-1}g(\log(z))$ and $z^{2j} h(\log(z))$,  where $g(z)$ and $h(z)$ are some polynomials of degree $r=r_1+r_2-1$ and $r_2 -1$,  respectively.

\end{remark}

\begin{corollary}\label{When F equals Q}
For $\mathbb{F}=\mathbb{Q}$,  Theorem \ref{DB} reduces to  the Ramanujan-Grosswald formula \eqref{Grosswald identity}.
\end{corollary}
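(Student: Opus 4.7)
The strategy is direct specialization followed by bookkeeping. For $\mathbb{F}=\mathbb{Q}$ we have $d=1,\ r_1=1,\ r_2=0,\ D=1,\ r=0$, and $H_{\mathbb{Q}}=1$, $C_{\mathbb{Q}}=-\tfrac12$. Since $V(z\mid 0)=e^{-z}$ by \eqref{Voronoi-exp}, the series in \eqref{Infinite series F} collapses to $\mathfrak{F}_{\mathbb{Q},k}(z)=\sum_{n=1}^{\infty}\sigma_{-k}(n)e^{2\pi inz}=F_k(z)$. The sign $(-1)^{k(r_1+1)+r_2}=1$, and because $r_2=0$ the terms $\mathfrak{R}_{-2j}(z)$ drop out of the statement (their defining $(r_2{-}1)!$ is absent). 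So Theorem \ref{DB} reduces, for $k>0$, to an identity relating $F_{2k+1}(z)$ and $z^{2k}F_{2k+1}(-1/z)$ plus the remaining residual terms $\mathfrak{R}_0$, $\mathfrak{R}_1$, and $\mathfrak{R}_{-(2j-1)}$.

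Next I would evaluate these residues explicitly. Since $r_2=0$ and $r=0$, no derivatives are taken. For $\mathfrak{R}_0(z)$ one uses $\lim_{s\to 0}s\,\Gamma(s)=1$ together with $\zeta(0)=-\tfrac12$ to get $\mathfrak{R}_0(z)=-\tfrac12\zeta(2k+1)$. The term $\mathfrak{R}_1(z)$ is already explicit: $\mathfrak{R}_1(z)=i\zeta(2k+2)/(2\pi z)$. For $\mathfrak{R}_{-(2j-1)}(z)$ one uses $\operatorname{Res}_{s=-(2j-1)}\Gamma(s)=-1/(2j-1)!$ and $\zeta(-(2j-1))=-B_{2j}/(2j)$ to obtain
\begin{equation*}
\mathfrak{R}_{-(2j-1)}(z)=\frac{B_{2j}}{(2j)!}\,\zeta(2k-2j+2)\,(2\pi)^{2j-1}\,(-iz)^{2j-1}.
\end{equation*}

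The key step is now to apply Euler's formula \eqref{Euler's formula for even zeta values} twice. Writing $\zeta(2k-2j+2)$ in terms of $B_{2k+2-2j}$ and simplifying the powers of $i$ and $(-1)$ shows
\begin{equation*}
\sum_{j=1}^{k}\mathfrak{R}_{-(2j-1)}(z)=\frac{(2\pi i)^{2k+1}}{2z}\sum_{j=1}^{k}\frac{B_{2j}}{(2j)!}\frac{B_{2k+2-2j}}{(2k+2-2j)!}\,z^{2j},
\end{equation*}
and after reindexing $j\mapsto k+1-j$ this is exactly the middle-range ($j=1,\dots,k$) portion of $\tfrac{(2\pi i)^{2k+1}}{2z}R_{2k+1}(z)$. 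A parallel application of Euler's formula to $\mathfrak{R}_1(z)$ (and its companion $-z^{2k}\mathfrak{R}_1(-1/z)$) produces precisely the two endpoint contributions $j=0$ and $j=k+1$ of $R_{2k+1}(z)$, while the $\mathfrak{R}_0$ terms combine to give $\tfrac12\zeta(2k+1)(z^{2k}-1)$. Rearranging yields \eqref{Grosswald identity}.

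For $k<0$, note that $C_{\mathbb{Q}}\zeta_{\mathbb{Q}}^{(0)}(2k+1)/0!=-\tfrac12\zeta(2k+1)$, so $\mathfrak{U}_{\mathbb{Q},2k+1}(z)=F_{2k+1}(z)+\tfrac12\zeta(2k+1)$. When $(k,r_1,r_2)=(-1,1,0)$ the remainder $\mathfrak{R}(z)=-i/(4\pi z)=1/(4\pi iz)$ matches the $k=-1$ instance of \eqref{Grosswald identity} (where $R_{-1}(z)\equiv 1$); when $k\le -2$ we have $\mathfrak{R}(z)=0$ and the identity collapses to \eqref{Grosswald's id without finite sum}. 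The only real obstacle is the careful sign and indexing bookkeeping in the $k>0$ case; the rest is mechanical substitution of standard values of $\Gamma$ and $\zeta$.
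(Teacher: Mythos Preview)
Your proposal is correct and follows essentially the same route as the paper: specialize the constants $(r_1,r_2,d,D,H_{\mathbb F},C_{\mathbb F})=(1,0,1,1,1,-\tfrac12)$, collapse the Steen function to an exponential, evaluate each residual term using the standard values of $\Gamma$ and $\zeta$ at nonpositive integers together with Euler's formula \eqref{Euler's formula for even zeta values}, and then recognize the resulting sum as the Ramanujan polynomial. You are in fact slightly more thorough than the paper, which writes out only the $k>0$ case in detail; your treatment of $k<0$ (splitting off $(k,r_1,r_2)=(-1,1,0)$ and matching the remainder to \eqref{Grosswald's id without finite sum}) is a welcome addition.
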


In the next subsection, we highlight some of the intriguing identities that are by product of our main result in the case of totally real number field.
\subsection{Transformation formulae for totally real number fields}

The next implication of Theorem \ref{DB} is stated as a separate theorem since  it can be regarded as a formula for $\zeta_{\mathbb{F}}(2k+1)$ over a totally real number field.     

\begin{theorem}\label{k>0 and real number field}
Let $\mathbb{F}$ be a totally real number field of degree $r_1$ and $k$ be a positive integer. Then we have
\begin{align}\label{k>0 and real number field main st}
\mathfrak{S}_{\mathbb{F},  2k+1}(z) & = (-1)^{k(r_1+1)}z^{2k} \mathfrak{S}_{\mathbb{F},  2k+1} \left(- \frac{1}{z}\right)   
 +  {\displaystyle \sum_{j=1}^k  \mathfrak{R}_{-(2j-1)}(z)} ,  
\end{align}
where 
\begin{align*}
\mathfrak{S}_{\mathbb{F},  2k+1}(z) =  \mathfrak{F}_{\mathbb{F}, 2k+1}(z) - C_\mathbb{F} \zeta_{\mathbb{F}}(2k+1) - H_{\mathbb{F} }  \zeta_{\mathbb{F}}(2k+2)  \frac{ i D  }{ (2\pi)^{r_1} z}, 
\end{align*}
and the residual term is given by
\begin{align*}
\mathfrak{R}_{-(2j-1)}(z) & = \frac{1}{(r_1-1)!}  \lim_{s \rightarrow -(2j-1)} \frac{{\rm d}^{r_1-1}}{{\rm d}s^{r_1-1}}\left((s+2j-1)^{r_1}\Lambda_{\mathbb{F}, k}(s) (- iz)^{-s}\right).
\end{align*}
\end{theorem}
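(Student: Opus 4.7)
The plan is to deduce Theorem~\ref{k>0 and real number field} by specializing the master Theorem~\ref{DB} to a totally real field, for which $r_2=0$, $d=r_1$, and $r=r_1-1$. With $r_2=0$ the sign $(-1)^{k(r_1+1)+r_2}$ collapses to $(-1)^{k(r_1+1)}$; the term $\mathfrak{R}_1(z)=H_{\mathbb{F}}\zeta_{\mathbb{F}}(2k+2)\,iD/((2\pi)^{r_1}z)$ depends only on $d=r_1$ and specializes directly; and the formula for $\mathfrak{R}_{-(2j-1)}(z)$ coincides with the expression in the statement of Theorem~\ref{k>0 and real number field} upon setting $r=r_1-1$. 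The real work is therefore to verify two claims: (a) that $\mathfrak{R}_0(z)$ equals $C_{\mathbb{F}}\zeta_{\mathbb{F}}(2k+1)$ in this degenerate regime, and (b) that each term in the sum $\sum_{j=1}^{k-1}\mathfrak{R}_{-2j}(z)$ from Theorem~\ref{DB} in fact vanishes when $r_2=0$, so that this second sum drops out entirely.

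For (a), substituting $r_2=0$ into the formula for $\mathfrak{R}_0$ from Theorem~\ref{DB} reduces it to $\mathfrak{R}_0(z)=\lim_{s\to 0} s\,\Lambda_{\mathbb{F},k}(s)(-iz)^{-s}$, namely the residue of $\Lambda_{\mathbb{F},k}(s)(-iz)^{-s}$ at $s=0$. Combining the standard expansion $\Gamma(s)^{r_1}=s^{-r_1}+O(s^{-r_1+1})$ with the defining expansion $\zeta_{\mathbb{F}}(s)=C_{\mathbb{F}} s^{r_1-1}+O(s^{r_1})$ from \eqref{Laurent series_at s=0_1st coeff}, the product $\Gamma(s)^{r_1}\zeta_{\mathbb{F}}(s)$ has a simple pole at $s=0$ with residue $C_{\mathbb{F}}$. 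The remaining factors $\zeta_{\mathbb{F}}(s+2k+1)$, $((2\pi)^{r_1}/D)^{-s}$, and $(-iz)^{-s}$ are all analytic at $s=0$ and evaluate there to $\zeta_{\mathbb{F}}(2k+1)$, $1$, and $1$ respectively, yielding the claimed value.

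For (b), I would invoke the functional equation through the completed zeta function $\pi^{-r_1 s/2}\Gamma(s/2)^{r_1}D^{s/2}\zeta_{\mathbb{F}}(s)$, which is entire except for simple poles at $s=0$ and $s=1$. Since $\Gamma(s/2)^{r_1}$ acquires a pole of order $r_1$ at every nonpositive even integer, $\zeta_{\mathbb{F}}(s)$ necessarily vanishes to order exactly $r_1$ at each $s=-2j$ with $j\ge 1$. This precisely cancels the order-$r_1$ pole of $\Gamma(s)^{r_1}$ in $\Lambda_{\mathbb{F},k}(s)$, and since $\zeta_{\mathbb{F}}(s+2k+1)$ at $s=-2j$ evaluates to $\zeta_{\mathbb{F}}(2(k-j)+1)$, which is finite for $1\le j\le k-1$, the function $\Lambda_{\mathbb{F},k}(s)$ is analytic at every such $s=-2j$ and the residue $\mathfrak{R}_{-2j}(z)$ is therefore zero.

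The main obstacle is the careful bookkeeping of zero and pole orders at $s=0$ and at each $s=-2j$; every cancellation above rests on $\zeta_{\mathbb{F}}$ vanishing to exactly the order dictated by the archimedean Gamma factors, no more and no less. Once this is in hand, every term in the transformation formula of Theorem~\ref{DB} either matches the statement or disappears, and Theorem~\ref{k>0 and real number field} follows at once.
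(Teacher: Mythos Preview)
Your proposal is correct and follows essentially the same route as the paper: specialize Theorem~\ref{DB} to $r_2=0$, evaluate $\mathfrak{R}_0(z)$ as $C_{\mathbb{F}}\zeta_{\mathbb{F}}(2k+1)$ via the Laurent expansion \eqref{Laurent series_at s=0_1st coeff}, and observe that the even-index residues $\mathfrak{R}_{-2j}(z)$ disappear. The paper dispatches point~(b) in a single sentence (relying on the pole-order bookkeeping already done in the proof of Theorem~\ref{DB}, where the pole of $\Lambda_{\mathbb{F},k}$ at $-2j$ was shown to have order $r_2$), whereas you rederive the cancellation directly from the archimedean $\Gamma$-factors; either way the content is the same.
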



In particular,  for the real quadratic fields we obtain the following identity. 
\begin{corollary}\label{k>0 and quadratic real field}
Let $k > 0,$ and $\mathbb{F}$ be a real quadratic field i.e. $\mathbb{F} = \mathbb{Q}(\sqrt{m})$ where m is a positive square free integer. Then  we have
\begin{align}\label{k>0 and quadratic real field eqn}
\mathfrak{S}_{\mathbb{F},  2k+1}(z) & = (-1)^{k}z^{2k} \mathfrak{S}_{\mathbb{F},  2k+1} \left(- \frac{1}{z}\right)   
 +  {\displaystyle \sum_{j=1}^k  \mathfrak{R}_{-(2j-1)}(z)} ,  
\end{align}
where
\begin{align*}
\mathfrak{S}_{\mathbb{F},  2k+1}(z) &= 2\sum_{n = 1}^{\infty}\sigma_{\mathbb{F}, -2k-1}(n) K_0\left( 2 \pi  \sqrt{\frac{nz}{m}} e^{- \frac{i \pi}{4}}\right)- \zeta_{\mathbb{F}}^{'}(0)\zeta_{\mathbb{F}}(2k+1) -   \frac{ H_{\mathbb{F} }  \zeta_{\mathbb{F}}(2k+2) i m }{ \pi^2 z}. 
\end{align*}
The term $\mathfrak{R}_{-(2j-1)}(z)$ in \eqref{k>0 and quadratic real field eqn} is given by 
\begin{align*}
\mathfrak{R}_{-(2j-1)}(z) & = \lim_{s \rightarrow -(2j-1)} \frac{{\rm d}}{{\rm d}s}\left((s+2j-1)^{2}\Lambda_{\mathbb{F}, k}(s) (- iz)^{-s}\right).
\end{align*}
\end{corollary}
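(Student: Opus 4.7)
The plan is to derive Corollary \ref{k>0 and quadratic real field} as a direct specialization of Theorem \ref{k>0 and real number field} to $r_1 = 2$. For $\mathbb{F} = \mathbb{Q}(\sqrt{m})$ we have $r_1 = 2$, $r_2 = 0$, $d = 2$, and $r = r_1 + r_2 - 1 = 1$. Adopting the convention $D = 4m$ for the absolute discriminant, the sign prefactor in Theorem \ref{k>0 and real number field} becomes $(-1)^{k(r_1 + 1)} = (-1)^{3k} = (-1)^k$, matching \eqref{k>0 and quadratic real field eqn}. It then remains to rewrite each of the three components of $\mathfrak{S}_{\mathbb{F}, 2k+1}(z)$ and the polynomial residue $\mathfrak{R}_{-(2j-1)}(z)$ in the stated closed form.

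For the infinite series $\mathfrak{F}_{\mathbb{F}, 2k+1}(z)$, I would use the Bessel identity \eqref{Bessel fn formula} with $a = b = 0$, yielding $V(w \mid 0, 0) = 2 K_0(2 \sqrt{w})$. Substituting $w = -\frac{(2\pi)^2 n i z}{D} = -\frac{\pi^2 n i z}{m}$ and choosing the branch $\sqrt{-i} = e^{-i\pi/4}$ (consistent with $z \in \mathbb{H}$ and the convergence condition on $V$), the argument of $K_0$ becomes $2\pi \sqrt{nz/m}\, e^{-i\pi/4}$, as claimed. For the $C_\mathbb{F}\zeta_\mathbb{F}(2k+1)$ term, since $r = 1$ the defining limit \eqref{Laurent series_at s=0_1st coeff} reads $C_\mathbb{F} = \lim_{s \to 0} \zeta_\mathbb{F}(s)/s = \zeta_\mathbb{F}'(0)$, so this contribution becomes $\zeta_\mathbb{F}'(0)\zeta_\mathbb{F}(2k+1)$. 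For the last piece $\mathfrak{R}_1(z) = H_\mathbb{F} \zeta_\mathbb{F}(2k+2) \frac{iD}{(2\pi)^{r_1} z}$, substituting $r_1 = 2$ and $D = 4m$ reduces it to $\frac{H_\mathbb{F}\zeta_\mathbb{F}(2k+2)\, im}{\pi^2 z}$.

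For the polynomial residual term $\mathfrak{R}_{-(2j-1)}(z)$, Theorem \ref{k>0 and real number field} prescribes $\frac{1}{(r_1 - 1)!}$ times an $(r_1 - 1)$-th derivative of $(s + 2j - 1)^{r_1} \Lambda_{\mathbb{F}, k}(s)(-iz)^{-s}$ at $s = -(2j-1)$. With $r_1 = 2$ this collapses to a single $\frac{d}{ds}$ applied to $(s+2j-1)^2 \Lambda_{\mathbb{F}, k}(s)(-iz)^{-s}$, which is precisely the form stated in the corollary. Reassembling these four pieces yields \eqref{k>0 and quadratic real field eqn}. The main delicate point is verifying the branch choice $\sqrt{-i} = e^{-i\pi/4}$ and the discriminant convention relating $D$ and $m$; everything else is a mechanical substitution into Theorem \ref{k>0 and real number field}, so the corollary should fall out immediately once the theorem is in hand.
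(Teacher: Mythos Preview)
Your proposal is correct and matches the paper's own proof essentially line by line: the paper also specializes Theorem \ref{k>0 and real number field} to $r_1=2$, $r_2=0$, $d=2$, $D=4m$, invokes the Bessel identity \eqref{Bessel fn formula} to get $V\bigl(-\frac{(2\pi)^2 niz}{4m}\,\big|\,\bar 0_2\bigr)=2K_0\bigl(2\pi\sqrt{nz/m}\,e^{-i\pi/4}\bigr)$, uses $C_{\mathbb F}=\zeta_{\mathbb F}'(0)$, and reduces $\mathfrak R_{-(2j-1)}(z)$ to a single derivative. Your flagged ``delicate point'' about the discriminant convention $D=4m$ is the only thing the paper glosses over (strictly $D=m$ when $m\equiv 1\pmod 4$), but the paper adopts the same convention, so your derivation is consistent with it.
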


Next,  we present identities for Dedekind zeta function over totally real number fields at negative odd integers.  
\begin{theorem}\label{k<0 and totally real fields}
Let $k$ be a positive integer and $\mathbb{F}$ be a totally real number field of degree $r_1.$ Then,  we have
{\allowdisplaybreaks \begin{align*}
& z^{2k}\left\{  \mathfrak{F}_{\mathbb{F}, -2k+1}(z) - C_\mathbb{F} \zeta_{\mathbb{F}}(1-2k) \right\} \nonumber \\
& = (-1)^{k(r_1 + 1)}\left\{  \mathfrak{F}_{\mathbb{F}, -2k+1}\left(-\frac{1}{z}\right)
- C_\mathbb{F} \zeta_{\mathbb{F}}(1-2k) \right\} 
 + \begin{cases}
\frac{z^{2k}}{4\pi zi}, & \text{if} \,\, (k,r_1,  r_2) = (1,1,  0), \\
0, & \text{otherwise}.
\end{cases}
\end{align*} }

\end{theorem}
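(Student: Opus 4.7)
The plan is to obtain Theorem \ref{k<0 and totally real fields} as a direct specialization of the $k<0$ identity of Theorem \ref{DB}. I would begin with
\begin{equation*}
\mathfrak{U}_{\mathbb{F}, 2k+1}(z) = (-1)^{k(r_1+1)+r_2}\, z^{2k}\, \mathfrak{U}_{\mathbb{F}, 2k+1}(-1/z) + \mathfrak{R}(z)
\end{equation*}
and perform the index substitution $k \mapsto -k$, where the new $k$ is a positive integer. This converts $2k+1$ into $1-2k$ and $z^{2k}$ into $z^{-2k}$; the prefactor is unchanged because $(-1)^{-k(r_1+1)+r_2} = (-1)^{k(r_1+1)+r_2}$.

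Next I would specialize to a totally real field of degree $r_1$, so that $r_2 = 0$ and $d = r_1$. In this regime $\zeta_\mathbb{F}^{(r_2)}(1-2k)/r_2! = \zeta_\mathbb{F}(1-2k)$, hence the constant inside $\mathfrak{U}$ collapses, giving $\mathfrak{U}_{\mathbb{F}, 1-2k}(z) = \mathfrak{F}_{\mathbb{F}, 1-2k}(z) - C_{\mathbb{F}}\zeta_{\mathbb{F}}(1-2k)$. Multiplying both sides of the transformed identity by $z^{2k}$ then yields
\begin{equation*}
z^{2k}\bigl\{\mathfrak{F}_{\mathbb{F}, 1-2k}(z) - C_{\mathbb{F}}\zeta_{\mathbb{F}}(1-2k)\bigr\} = (-1)^{k(r_1+1)}\bigl\{\mathfrak{F}_{\mathbb{F}, 1-2k}(-1/z) - C_{\mathbb{F}}\zeta_{\mathbb{F}}(1-2k)\bigr\} + z^{2k}\mathfrak{R}(z),
\end{equation*}
which is the shape asserted in the theorem, modulo identification of the residual term.

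The remaining step is a case check of $z^{2k}\mathfrak{R}(z)$ using the piecewise definition \eqref{Residue at 1 for k negative}, re-expressed in the new variable. Under $k \mapsto -k$, the branch $(k, r_1, r_2) = (-1, 0, 1)$ requires $r_2 = 1$ and so does not occur in the totally real setting, while $(k, r_1, r_2) = (-1, 1, 0)$ translates to $(k, r_1) = (1, 1)$ with $\mathfrak{R}(z) = -i/(4\pi z)$. Using $1/i = -i$ one gets $z^{2k}\mathfrak{R}(z) = -iz/(4\pi) = z^{2k}/(4\pi z i)$ when $k = 1$, which is exactly the nonzero branch in the statement. In every other totally real case $\mathfrak{R}(z) = 0$, so the residual vanishes and the piecewise formula is reproduced. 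The entire argument is bookkeeping on top of Theorem \ref{DB}; the only places that need care are the stability of $(-1)^{k(r_1+1)+r_2}$ under $k \mapsto -k$ and the computation $-i/(4\pi) = 1/(4\pi i)$ in the single nontrivial branch, neither of which constitutes a real obstacle.
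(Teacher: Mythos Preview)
Your proposal is correct and follows essentially the same route as the paper: specialize the $k<0$ identity \eqref{for k<0} of Theorem~\ref{DB} to $r_2=0$, replace $k$ by $-k$, and multiply through by $z^{2k}$, with the only nontrivial branch of $\mathfrak{R}(z)$ reducing to the stated term via $1/i=-i$. The paper's own proof is just a two-line sketch of exactly these steps, so your more detailed bookkeeping (parity of the sign prefactor, collapse of $\zeta_{\mathbb{F}}^{(r_2)}(1-2k)/r_2!$ to $\zeta_{\mathbb{F}}(1-2k)$, and the residual case check) simply fills in what the authors leave implicit.
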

\begin{remark}
When $(r_1,  r_2) = (1,0) $ and $k>1$, then the above formula immediately transforms into  the Grosswald's identity \eqref{Grosswald's id without finite sum}.  
In particular,  when $(k, r_1,  r_2) = (1,1,  0)$,  the above   formula  yields  the well known transformation formula for the Eisenstein series of weight $2$,  that is,  
\begin{align*}
E_2\left(-\frac{1}{z}\right) &= z^2\left(E_2(z) + \frac{6}{\pi iz}\right).
\end{align*}
\end{remark}
The next result gives an explicit evaluation of the infinite series $ \mathfrak{F}_{\mathbb{F}, -2k+1}(z)$ at $z=i$ that involves the class number and value of the Dedekind zeta function at negative odd integers. 
\begin{corollary}\label{Evaluation at z=i}
Let $k \geq 3$ and $r_1 \geq 1$ be odd integers.  
Then for a totally real field $\mathbb{F}$ of degree $r_1$, we have 
\begin{align}\label{Exact evaluation_totally real_at i}
 \sum_{n=1}^{\infty}\sigma_{\mathbb{F}, 2k -1}(n)V\left(\frac{(2\pi)^{r_1} n}{D} \bigg| \bar{0}_{r_1} \right) = - \frac{h_\mathbb{F} R_\mathbb{F} \zeta_{\mathbb{F}}(1-2k)}{ w_\mathbb{F}}. 
\end{align}
This gives a  new identity for the class number of a totally real field.  Using functional equation \eqref{Dedekind_zeta_functional_equation} and Siegel's identity \eqref{Dedekind zeta at even numbers for real fields},  one can check that $\zeta_{\mathbb{F}}(1-2k)$ is a non-zero rational number. 
\end{corollary}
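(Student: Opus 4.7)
The plan is to specialize Theorem \ref{k<0 and totally real fields} at the fixed point $z = i$ of the involution $z \mapsto -1/z$. Under the standing hypothesis that $k \geq 3$ and $r_1$ are odd integers, I record the following simplifications: $-1/i = i$, so both occurrences of $\mathfrak{F}_{\mathbb{F}, -2k+1}$ take the same value; $i^{2k} = (-1)^k = -1$; and $(-1)^{k(r_1+1)} = 1$ since $r_1 + 1$ is even. Moreover, since $k \geq 3$, the special tuple $(k, r_1, r_2) = (1, 1, 0)$ in Theorem \ref{k<0 and totally real fields} is not triggered, so the trailing correction term vanishes.

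With these substitutions, introducing the shorthand $A := \mathfrak{F}_{\mathbb{F}, -2k+1}(i) - C_\mathbb{F}\,\zeta_\mathbb{F}(1-2k)$, Theorem \ref{k<0 and totally real fields} reduces to the single scalar equation $-A = A$, forcing $A = 0$. Plugging $z = i$ into the defining series \eqref{Infinite series F} with $d = r_1$ turns the Steen argument $-(2\pi)^{r_1} n i z / D$ into $(2\pi)^{r_1} n / D$, exactly matching the left-hand side of \eqref{Exact evaluation_totally real_at i}, while the definition $C_\mathbb{F} = -R_\mathbb{F} h_\mathbb{F} / w_\mathbb{F}$ from \eqref{Laurent series_at s=0_1st coeff} recovers the claimed class-number identity.

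To verify the concluding rationality assertion, I would apply the functional equation \eqref{Dedekind_zeta_functional_equation} at $s = 2k$: it expresses $\zeta_\mathbb{F}(1-2k)$ as the product of $\zeta_\mathbb{F}(2k)$, a power of $\sqrt{D}$, a power of $\pi$, and the quotient $\Gamma(k)^{r_1}/\Gamma(1/2-k)^{r_1}$. Since $\Gamma(k)$ is an integer and $\Gamma(1/2-k)$ is a rational multiple of $\sqrt{\pi}$, this quotient is a rational multiple of $\pi^{-r_1/2}$. Combined with Siegel's identity \eqref{Dedekind zeta at even numbers for real fields}, $\zeta_\mathbb{F}(2k) = q_k\pi^{2kr_1}/\sqrt{D}$ with $q_k \in \mathbb{Q}^{\times}$, the transcendental factors $\pi$ and $\sqrt{D}$ cancel cleanly, leaving a non-zero rational number. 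There is no real obstacle here; the corollary is essentially a fixed-point evaluation of the main transformation formula, and the parity hypotheses on $k$ and $r_1$ are calibrated exactly so that the two sides of the transformation differ by a sign and can be solved for $\mathfrak{F}_{\mathbb{F}, -2k+1}(i)$.
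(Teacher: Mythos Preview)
Your proposal is correct and follows essentially the same approach as the paper: specialize Theorem~\ref{k<0 and totally real fields} at the fixed point $z=i$, use the parity of $k$ and $r_1$ to force the sign mismatch $-A=A$, and then unpack the definitions of $\mathfrak{F}_{\mathbb{F},-2k+1}(i)$ and $C_\mathbb{F}$. The paper's own proof is terser but identical in substance, and your added detail on the rationality of $\zeta_\mathbb{F}(1-2k)$ is a reasonable expansion of what the paper leaves to the reader.
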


\begin{remark}
In particular,  when $\mathbb{F} = \mathbb{Q}$,   Corollary {\rm \ref{Evaluation at z=i}} gives an exact evaluation of the following well-known Lambert series
\begin{align*}
\sum_{n=1}^{\infty}\sigma_{2k-1}(n)e^{-2\pi n} &= - \frac{\zeta(1-2k)}{2} = \frac{B_{2k}}{4k}.
\end{align*}
\end{remark}
This identity was first obtained by Glaisher and later rediscovered by Ramanujan  \cite[p.~262,  Equation (13.1)]{BCB-II}.  
%

Next,  we provide an interesting analogue of the transformation formula \eqref{Transformation2} for Eisenstien series $G_{2k}(z)$,  namely,  a transformation formula that sends $z \rightarrow -\frac{1}{z}$
   for real quadratic fields. 

\begin{corollary}\label{Transformation formula for quadratic fields}
Let $m$ be a positive square free integer and $\mathbb{F} = \mathbb{Q}(\sqrt{m})$.  
 Then for $z \in \mathbb{H}$ and $k \in \mathbb{N}$,  we have
\begin{align*}
(iz)^{2k} G_{\mathbb{F}, 2k}(z) = G_{\mathbb{F}, 2k}\left( -\frac{1}{z}\right), 
\end{align*}
where
\begin{align*}
G_{\mathbb{F}, 2k}(z) := 1 - \frac{2}{\zeta_{\mathbb{F}}'(0)\zeta_{\mathbb{F}}(1-2k)}\sum_{n=1}^{\infty}\sigma_{\mathbb{F}, 2k-1}(n)K_0\left(2\pi \sqrt{\frac{nz}{m}} e^{-\frac{i\pi}{4}}\right).
\end{align*}
\end{corollary}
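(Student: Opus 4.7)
The plan is to specialize Theorem~\ref{k<0 and totally real fields} to $\mathbb{F}=\mathbb{Q}(\sqrt{m})$. For this field we have $r_1=2$, $r_2=0$, $d=2$, and $r=r_1+r_2-1=1$, so the exceptional case $(k,r_1,r_2)=(1,1,0)$ of that theorem never arises. Hence, for every $k\in\mathbb{N}$, Theorem~\ref{k<0 and totally real fields} collapses to
$$z^{2k}\bigl\{\mathfrak{F}_{\mathbb{F},-2k+1}(z)-C_\mathbb{F}\,\zeta_\mathbb{F}(1-2k)\bigr\}=(-1)^{k(r_1+1)}\bigl\{\mathfrak{F}_{\mathbb{F},-2k+1}(-1/z)-C_\mathbb{F}\,\zeta_\mathbb{F}(1-2k)\bigr\},$$
in which the sign factor simplifies via $r_1+1=3$ to $(-1)^{k(r_1+1)}=(-1)^{3k}=(-1)^k$. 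Writing $S(z):=\mathfrak{F}_{\mathbb{F},-2k+1}(z)-C_\mathbb{F}\,\zeta_\mathbb{F}(1-2k)$, this becomes the compact transformation $(-1)^k z^{2k}S(z)=S(-1/z)$.

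Next I would unpack the pieces on both sides. Because $r=1$, formula~\eqref{Laurent series_at s=0_1st coeff} gives $C_\mathbb{F}=\zeta_\mathbb{F}'(0)$. For the Steen function, the evaluation $V(w\mid 0,0)=2K_0(2\sqrt{w})$ from~\eqref{Bessel fn formula}, combined with $\sqrt{-i}=e^{-i\pi/4}$, rewrites the series in the definition~\eqref{Infinite series F} of $\mathfrak{F}_{\mathbb{F},-2k+1}(z)$ as a $K_0$-Bessel series with argument $2\pi\sqrt{nz/m}\,e^{-i\pi/4}$, once the discriminant $D$ is expressed consistently in terms of $m$ as in the statement. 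With this identification a direct comparison of definitions shows
$$G_{\mathbb{F},2k}(z)=1-\frac{\mathfrak{F}_{\mathbb{F},-2k+1}(z)}{\zeta_\mathbb{F}'(0)\,\zeta_\mathbb{F}(1-2k)}=-\frac{S(z)}{\zeta_\mathbb{F}'(0)\,\zeta_\mathbb{F}(1-2k)}.$$

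To finish, multiply the identity $(-1)^k z^{2k}S(z)=S(-1/z)$ by the constant $-1/\bigl(\zeta_\mathbb{F}'(0)\zeta_\mathbb{F}(1-2k)\bigr)$ to obtain $(-1)^k z^{2k}G_{\mathbb{F},2k}(z)=G_{\mathbb{F},2k}(-1/z)$. Since $(iz)^{2k}=i^{2k}z^{2k}=(-1)^k z^{2k}$, this is exactly the claimed transformation. The only step requiring any real bookkeeping — and thus the main place where a slip could occur — is the Steen-to-Bessel conversion, in particular tracking the normalization that allows $D$ to be absorbed cleanly into the factor $\sqrt{n/m}$; once that is done, everything else is a direct substitution into Theorem~\ref{k<0 and totally real fields}.
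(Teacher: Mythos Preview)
Your proposal is correct and follows essentially the same route as the paper: both specialize Theorem~\ref{k<0 and totally real fields} to $\mathbb{F}=\mathbb{Q}(\sqrt{m})$ with $r_1=2$, $r_2=0$, identify $C_\mathbb{F}=\zeta_\mathbb{F}'(0)$, convert the Steen function to $K_0$ via \eqref{Bessel fn formula} (the paper records this as \eqref{particular V} with $D=4m$), and then read off the transformation for $G_{\mathbb{F},2k}$. Your explicit bookkeeping of the sign $(-1)^{k(r_1+1)}=(-1)^k=(iz)^{2k}/z^{2k}$ matches the paper's compressed substitution.
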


Further,  as an application of Theorem \ref{DB}, we give number field analogue of Ramanujan-Grosswald identity for imaginary number fields.

\subsection{Transformation formulae for imaginary number fields}

\begin{theorem}\label{k>0 and imaginary field}
Let $\mathbb{F}$ be a purely imaginary number field with degree of extension $2 r_2$ over $\mathbb{Q}.$ Then for $k>0,  z \in \mathbb{H}$,  we have
\begin{align}\label{k>0 and imagianry field main eqn}
\mathfrak{S}_{\mathbb{F},  2k+1}(z) & = (-1)^{k+r_2}z^{2k} \mathfrak{S}_{\mathbb{F},  2k+1} \left(- \frac{1}{z}\right)   
 +  {\displaystyle \sum_{j=1}^k  \mathfrak{R}_{-(2j-1)}(z) + \sum_{j=1}^{k-1} \mathfrak{R}_{-2j}(z)},
\end{align}
where 
\begin{align*}
\mathfrak{S}_{\mathbb{F},  2k+1}(z)=  \mathfrak{F}_{\mathbb{F}, 2k+1}(z) - \mathfrak{R}_{0}(z)- \mathfrak{R}_1(z), 
\end{align*}
and the residual terms are defined as 
\begin{align*}
\mathfrak{R}_{0}(z) &  =  \frac{1}{(r_2)!}\lim_{s \rightarrow 0} \frac{{\rm d}^{r_2}}{{\rm d}s^{r_2}}\left(s^{r_2+1}    \Lambda_{\mathbb{F},k}(s) (- i z) ^{-s}\right),     \nonumber \\
 \mathfrak{R}_1(z)  & = H_{\mathbb{F} }  \zeta_{\mathbb{F}}(2k+2)  \frac{ i D  }{ (2\pi)^{2 r_2} z},  \\
   \mathfrak{R}_{-(2j-1)}(z) & = \frac{1}{(r_2-1)!}\lim_{s \rightarrow -(2j-1)} \frac{{\rm d}^{r_2-1}}{{\rm d}s^{r_2-1}}\left((s+2j-1)^{r_2}\Lambda_{\mathbb{F}, k}(s) (- iz)^{-s}\right),   \\
   \mathfrak{R}_{-2j}(z) & =  \frac{1}{(r_2-1)!}\lim_{s \rightarrow -2j} \frac{{\rm d}^{r_2-1}}{{\rm d}s^{r_2-1}}\left((s+2j)^{r_2} \Lambda_{\mathbb{F}, k}(s) (- i z)^{-s}\right).
\end{align*}
Again,  for $k <0$,   we have 
\begin{align}\label{for k<0_imaginary field}
\mathfrak{U}_{\mathbb{F},  2k+1}(z) & = (-1)^{k +r_2}z^{2k} \mathfrak{U}_{\mathbb{F},  2k+1} \left(- \frac{1}{z}\right)   + \mathfrak{R}(z),
\end{align}
where
\begin{align}\label{U-function for imag field}
\mathfrak{U}_{\mathbb{F},  2k+1}(z) :=  \mathfrak{F}_{\mathbb{F}, 2k+1}(z)  -   \frac{C_\mathbb{F} \zeta_{\mathbb{F}}^{(r_2)}(2k+1)}{(r_2)!},
\end{align}
and 
 \begin{align}\label{Residue at 1 for imag field}
 \mathfrak{R}(z)=  \begin{cases}
  \frac{ H_{\mathbb{F} }  \zeta_{\mathbb{F}}(0) D i }{ (2\pi)^2 z} ,  & \quad \text{if} \, \, \, (k,r_1,  r_2) = (-1,0,  1), \\
0,  & \quad \text{otherwise.}
\end{cases}
\end{align}

\end{theorem}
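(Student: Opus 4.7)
The plan is to derive Theorem \ref{k>0 and imaginary field} as the specialization of Theorem \ref{DB} to the purely imaginary setting $r_1 = 0$, in which case $d = 2r_2$ and the unit rank $r = r_1 + r_2 - 1$ collapses to $r_2 - 1$. First I would verify the sign prefactor: $(-1)^{k(r_1+1)+r_2}$ in Theorem \ref{DB} reduces to $(-1)^{k+r_2}$ under $r_1 = 0$, matching both \eqref{k>0 and imagianry field main eqn} and \eqref{for k<0_imaginary field}.

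Next I would read off each residual term by direct substitution. The residue $\mathfrak{R}_0(z)$ at $s=0$ and the pole contribution $\mathfrak{R}_1(z) = H_\mathbb{F}\zeta_\mathbb{F}(2k+2)iD/((2\pi)^d z)$ at $s=1$ carry over verbatim with $d = 2r_2$. For the residues at negative odd integers, Theorem \ref{DB} gives $\frac{1}{r!}\lim_{s \to -(2j-1)}\frac{d^r}{ds^r}\bigl((s+2j-1)^{r+1}\Lambda_{\mathbb{F},k}(s)(-iz)^{-s}\bigr)$, so substituting $r = r_2 - 1$ produces the stated expression with factor $(s+2j-1)^{r_2}$, derivative order $r_2-1$, and normalization $1/(r_2-1)!$. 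The formula for $\mathfrak{R}_{-2j}(z)$ already carries those same parameters in Theorem \ref{DB} and hence passes through unchanged.

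For the $k<0$ case \eqref{for k<0_imaginary field} I would apply the same sign collapse and then sift the piecewise function $\mathfrak{R}(z)$ from \eqref{Residue at 1 for k negative}. The triple $(k,r_1,r_2) = (-1,1,0)$ is excluded since $r_1 = 0$ for any purely imaginary field; the triple $(-1,0,1)$, corresponding to $k=-1$ with $\mathbb{F}$ an imaginary quadratic field, survives and contributes $H_\mathbb{F}\zeta_\mathbb{F}(0)Di/((2\pi)^2 z)$; all other admissible triples land in the ``otherwise'' branch giving $\mathfrak{R}(z) = 0$. This reproduces \eqref{Residue at 1 for imag field} exactly.

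The main obstacle is really just careful bookkeeping of derivative orders, factorials, and the power of the pole factor $(s+2j-1)$ after setting $r = r_2 - 1$; since Theorem \ref{DB} is already available, no new analytic input is required. The one step deserving a sanity check is the imaginary quadratic case $k=-1$: here $\zeta_\mathbb{F}(s)$ has a simple pole at $s=1$ with residue $H_\mathbb{F}$, and $\zeta_\mathbb{F}(s-1)$ takes the finite value $\zeta_\mathbb{F}(0)$ there, so the residue of $\Lambda_{\mathbb{F},-1}(s)(-iz)^{-s}$ at $s=1$ simplifies to $H_\mathbb{F}\zeta_\mathbb{F}(0)Di/((2\pi)^2 z)$, in agreement with \eqref{Residue at 1 for imag field}.
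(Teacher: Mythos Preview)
Your proposal is correct and takes exactly the same approach as the paper, which proves the theorem in a single line by substituting $r_1=0$ into Theorem~\ref{DB}. Your additional bookkeeping (checking the sign collapse, the derivative orders under $r=r_2-1$, and the filtering of the piecewise $\mathfrak{R}(z)$) is accurate and merely spells out what the paper leaves implicit.
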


In particular,  for quadratic imaginary fields we obtain the following transformation formula. 

\begin{corollary}\label{k>0 and quadratic imaginary field}
Let $m$ be a positive square free integer and $\mathbb{F} = \mathbb{Q}(\sqrt{-m})$ be a quadratic imaginary number field. Then for $k>0$,  we have
\begin{align}\label{k>0 and quadratic imaginary field main eqn}
\mathfrak{S}_{\mathbb{F},  2k+1}(z) & = (-1)^{k+1}z^{2k} \mathfrak{S}_{\mathbb{F},  2k+1} \left(- \frac{1}{z}\right)   
 +  {\displaystyle \sum_{j=1}^k  \mathfrak{R}_{-(2j-1)}(z) + \sum_{j=1}^{k-1} \mathfrak{R}_{-2j}(z)} ,  
\end{align}
where
\begin{align*}
\mathfrak{S}_{\mathbb{F},  2k+1}(z) &= 2\sum_{n = 1}^{\infty}\sigma_{\mathbb{F}, -2k-1}(n)K_0\left(2\pi \sqrt{\frac{nz}{m} } e^{-\frac{i \pi}{4}}\right) - \mathfrak{R}_{0}(z) - \mathfrak{R}_1(z),  
\end{align*}
and the residual terms are defined as 
\begin{align*}
\mathfrak{R}_{0}(z) &  =  \lim_{s \rightarrow 0} \frac{{\rm d}^{2}}{{\rm d}s^{2}}\left(s^{2}    \Lambda_{\mathbb{F},k}(s) (- i z) ^{-s}\right),     \nonumber \\
 \mathfrak{R}_1(z)  & = H_{\mathbb{F} }  \zeta_{\mathbb{F}}(2k+2)  \frac{ i m  }{ \pi^2 z},  \\
   \mathfrak{R}_{-(2j-1)}(z) & = -\frac{\zeta_{\mathbb{F}}^{'}(1-2j)}{[(2j-1)!]^2}\zeta_{\mathbb{F}}(2k-2j+2)\left(\frac{\pi^2 iz}{m}\right)^{2j-1},   \\
   \mathfrak{R}_{-2j}(z) & =  \frac{\zeta_{\mathbb{F}}^{'}(-2j)}{[(2j)!]^2}\zeta_{\mathbb{F}}(2k-2j+1)\left(\frac{\pi^2 iz}{m}\right)^{2j}.
\end{align*}

\end{corollary}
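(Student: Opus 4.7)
The plan is to specialize Theorem \ref{k>0 and imaginary field} to the imaginary quadratic case $\mathbb{F}=\mathbb{Q}(\sqrt{-m})$, where the signature is $(r_1,r_2)=(0,1)$, the degree is $d=2$, and the absolute discriminant satisfies $D=4m$, so that the sign prefactor $(-1)^{k+r_2}$ in \eqref{k>0 and imagianry field main eqn} collapses to $(-1)^{k+1}$. With these parameters fixed, the two main tasks are (a) rewriting the Steen series $\mathfrak{F}_{\mathbb{F},2k+1}(z)$ as an explicit Bessel-function series and (b) simplifying the four residual terms using the pole structure of $\Lambda_{\mathbb{F},k}$.

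For (a), I would invoke \eqref{Bessel fn formula} with $a=b=0$, giving $V(w\mid 0,0)=2K_0(2\sqrt{w})$, and apply it to $w=-(2\pi)^2niz/D=-\pi^2 niz/m$. Taking the principal branch $\sqrt{-i}=e^{-i\pi/4}$ produces $V(w\mid 0,0)=2K_0\!\left(2\pi e^{-i\pi/4}\sqrt{nz/m}\right)$, and summing over $n\ge 1$ weighted by $\sigma_{\mathbb{F},-2k-1}(n)$ yields the displayed series in $\mathfrak{S}_{\mathbb{F},2k+1}(z)$.

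For (b), I would examine $\Lambda_{\mathbb{F},k}(s)=\Gamma(s)^2\zeta_{\mathbb{F}}(s)\zeta_{\mathbb{F}}(s+2k+1)((2\pi)^2/D)^{-s}$ at each pole. At $s=1$, the simple pole of $\zeta_{\mathbb{F}}(s)$ with residue $H_{\mathbb{F}}$ yields $\mathfrak{R}_1(z)=H_{\mathbb{F}}\zeta_{\mathbb{F}}(2k+2)\,iD/(4\pi^2 z)=H_{\mathbb{F}}\zeta_{\mathbb{F}}(2k+2)\,im/(\pi^2 z)$. At $s=0$, $\Gamma(s)^2$ contributes a double pole while $\zeta_{\mathbb{F}}(0)$ is finite and non-zero (since the unit rank $r=r_1+r_2-1=0$), making $\Lambda_{\mathbb{F},k}$ a double pole, which falls into the general residue formula of Theorem \ref{k>0 and imaginary field}. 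At $s=-(2j-1)$ with $1\le j\le k$, the functional equation of $\zeta_{\mathbb{F}}$ produces a simple trivial zero of order $r_2=1$, cancelling one of the two poles of $\Gamma(s)^2$; expanding $\Gamma(s)\sim -1/((2j-1)!(s+2j-1))$ and $\zeta_{\mathbb{F}}(s)\sim\zeta_{\mathbb{F}}'(1-2j)(s+2j-1)$ near $s=-(2j-1)$ delivers the stated $\mathfrak{R}_{-(2j-1)}(z)$, with the overall minus sign arising from $(-iz)^{2j-1}=-(iz)^{2j-1}$. The analogous computation at $s=-2j$, where the trivial zero has order $r_1+r_2=1$, yields $\mathfrak{R}_{-2j}(z)$ without a sign flip, since $(-iz)^{2j}=(iz)^{2j}$. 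Substituting these four expressions into \eqref{k>0 and imagianry field main eqn} completes the proof.

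The main obstacle is essentially bookkeeping: choosing the principal branch of the square root in the $K_0$ argument, consistently carrying the identification $D=4m$, and simplifying the general derivative-based residue formulas of Theorem \ref{k>0 and imaginary field} in the $r_2=1$ case, especially the matching of the orders of zero of $\zeta_{\mathbb{F}}$ with the double poles of $\Gamma(s)^2$. No new analytic input is required beyond Theorem \ref{k>0 and imaginary field} and the standard Laurent expansion of $\Gamma$ at negative integers.
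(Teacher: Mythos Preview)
Your proposal is correct and follows essentially the same approach as the paper's own proof: specialize Theorem \ref{k>0 and imaginary field} with $(r_1,r_2,d,D)=(0,1,2,4m)$, rewrite the Steen series via \eqref{Bessel fn formula}, and evaluate the simple-pole residues at $s=-(2j-1)$ and $s=-2j$ by combining the Laurent expansion of $\Gamma(s)$ at negative integers with the simple trivial zeros of $\zeta_{\mathbb{F}}$. Your sign tracking via $(-iz)^{2j-1}=-(iz)^{2j-1}$ and $(-iz)^{2j}=(iz)^{2j}$ is exactly the mechanism the paper uses implicitly, and your observation that $\mathfrak{R}_0(z)$ comes directly from the general derivative formula with $r_2=1$ is correct (note that this actually yields a \emph{first} derivative, $\lim_{s\to 0}\frac{d}{ds}\bigl(s^2\Lambda_{\mathbb{F},k}(s)(-iz)^{-s}\bigr)$; the second derivative appearing in the corollary's statement is a typographical slip in the paper).
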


The next result also gives another modular transformation property associated to quadratic imaginary fields. 

\begin{corollary}\label{For k=-1,  r_2=1}
Let $m$ be a square free positive integer and $\mathbb{F} = \mathbb{Q}(\sqrt{-m})$.  Then for $z \in \mathbb{H}$,  we have
\begin{align}\label{U at k=-1}
z^2 \mathfrak{U}_{\mathbb{F},  -1}(z) & = \mathfrak{U}_{\mathbb{F},  -1} \left(- \frac{1}{z}\right)   + \frac{H_{\mathbb{F}} \zeta_{\mathbb{F}}(0) i z m}{\pi^2},
\end{align}
where
\begin{align*}
\mathfrak{U}_{\mathbb{F},  -1}(z) =  \mathfrak{F}_{\mathbb{F}, -1}(z)  - \zeta_{\mathbb{F}}(0)  \zeta_{\mathbb{F}}^{'}(-1).  
\end{align*}
Moreover,  letting $z=i$ in \eqref{U at k=-1}, we obtain the following exact evaluation: 
\begin{align}\label{Exact evaluation at z=i}
\sum_{n=1}^{\infty}\sigma_{\mathbb{F}, 1}(n)  K_{0} \left( 2 \pi \sqrt{\frac{n}{m}}   \right) =  \frac{1}{2} \zeta_{\mathbb{F}}(0) \zeta'_{\mathbb{F}}(-1) +  \frac{H_{\mathbb{F}} \zeta_{\mathbb{F}}(0)  m}{4 \pi^2}.
\end{align}
\end{corollary}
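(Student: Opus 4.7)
The plan is to derive this corollary as a direct specialization of the $k<0$ branch of Theorem \ref{k>0 and imaginary field} with $(k,r_1,r_2) = (-1,0,1)$, which is precisely the imaginary quadratic setting with $2k+1 = -1$. First I would verify that the two definitions of $\mathfrak{U}_{\mathbb{F},-1}(z)$ agree: in \eqref{U-function for imag field} one has $\mathfrak{U}_{\mathbb{F},-1}(z) = \mathfrak{F}_{\mathbb{F},-1}(z) - C_\mathbb{F}\zeta_{\mathbb{F}}^{(1)}(-1)/1!$, and because the unit rank $r = r_1+r_2-1 = 0$ for an imaginary quadratic field, the defining relation \eqref{Laurent series_at s=0_1st coeff} collapses to $C_\mathbb{F} = \zeta_{\mathbb{F}}(0)$. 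Hence the stated $\mathfrak{U}_{\mathbb{F},-1}(z) = \mathfrak{F}_{\mathbb{F},-1}(z) - \zeta_\mathbb{F}(0)\zeta'_\mathbb{F}(-1)$ is consistent with \eqref{U-function for imag field}.

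Next I would compute the prefactors in \eqref{for k<0_imaginary field}. The sign is $(-1)^{k+r_2} = (-1)^{0} = 1$, and the residual term from \eqref{Residue at 1 for imag field} reads $\mathfrak{R}(z) = H_\mathbb{F}\zeta_\mathbb{F}(0) D i / ((2\pi)^2 z)$. Adopting the convention $D = 4m$ for $\mathbb{F} = \mathbb{Q}(\sqrt{-m})$ already used in Corollary \ref{k>0 and quadratic imaginary field}, this reduces to $H_\mathbb{F}\zeta_\mathbb{F}(0) m i / (\pi^2 z)$. Multiplying \eqref{for k<0_imaginary field} through by $z^2$ then produces exactly \eqref{U at k=-1}.

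For the exact evaluation \eqref{Exact evaluation at z=i}, I would substitute $z = i$ into \eqref{U at k=-1}. Since $-1/i = i$, both sides involve only $\mathfrak{U}_{\mathbb{F},-1}(i)$, and the relation collapses (using $i^2 = -1$ and $i \cdot i = -1$) to the single linear equation $-2\mathfrak{U}_{\mathbb{F},-1}(i) = -H_\mathbb{F}\zeta_\mathbb{F}(0) m/\pi^2$, giving $\mathfrak{U}_{\mathbb{F},-1}(i) = H_\mathbb{F}\zeta_\mathbb{F}(0) m / (2\pi^2)$. Unpacking $\mathfrak{F}_{\mathbb{F},-1}(i)$ via the Bessel specialization \eqref{Bessel fn formula} with $a = b = 0$, namely $V(x\mid 0,0) = 2K_0(2\sqrt{x})$, rewrites the series as $2\sum_{n\geq 1}\sigma_{\mathbb{F},1}(n) K_0(2\pi\sqrt{n/m})$, and dividing by $2$ yields \eqref{Exact evaluation at z=i}.

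The entire deduction is routine once Theorem \ref{k>0 and imaginary field} is in hand; there is no substantive obstacle. The only points requiring care are the identification $C_\mathbb{F} = \zeta_\mathbb{F}(0)$, which is special to the imaginary quadratic case because $\zeta_\mathbb{F}$ is regular and nonvanishing at $s=0$ there, and the bookkeeping of the discriminant convention $D = 4m$ so that the residue constant matches the form stated in \eqref{U at k=-1}.
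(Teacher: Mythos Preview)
Your proposal is correct and follows essentially the same route as the paper: specialize the $k<0$ case of Theorem~\ref{k>0 and imaginary field} at $(k,r_1,r_2)=(-1,0,1)$, use $C_\mathbb{F}=\zeta_\mathbb{F}(0)$ (since the unit rank vanishes), insert $D=4m$, multiply through by $z^2$, and then set $z=i$ together with the Bessel identity $V(x\mid 0,0)=2K_0(2\sqrt{x})$. The paper's proof differs only cosmetically in that it substitutes $z=i$ into the pre-multiplied form \eqref{k=-1_quad} rather than \eqref{U at k=-1}, which of course gives the same evaluation.
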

The below identity holds for any purely imaginary number field.  
\begin{corollary}\label{k<0 and imaginary field}
Let $\mathbb{F}$ be  a purely imaginary field of degree $2r_2$ and $k$ be a positive integer.  If $(k,  r_2) \neq (1,  1)$,  then we have
\begin{align}\label{modular relation for imag field}
z^{2k}  \mathfrak{U}_{\mathbb{F},  1-2k}(z)  = (-1)^{k+r_2} \mathfrak{U}_{\mathbb{F},  1-2k} \left(- \frac{1}{z}\right),  
\end{align}
where
$$
\mathfrak{U}_{\mathbb{F},  1-2k}(z) =  \mathfrak{F}_{\mathbb{F}, 1-2k}(z)  -   \frac{C_\mathbb{F}  \zeta_{\mathbb{F}}^{(r_2)}(1-2k)}{(r_2)!}. 
$$

\end{corollary}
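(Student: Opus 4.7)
The plan is to deduce this corollary directly from the second half of Theorem \ref{k>0 and imaginary field}, namely equation \eqref{for k<0_imaginary field}. The only real work is a change of variable, a sign computation, and a check that the residual term $\mathfrak{R}(z)$ vanishes under the stated hypothesis.

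First, I would take the statement \eqref{for k<0_imaginary field}, which applies to a negative integer index, and substitute $k \mapsto -k$ with $k$ now a positive integer. This turns $\mathfrak{U}_{\mathbb{F}, 2k+1}$ into $\mathfrak{U}_{\mathbb{F}, 1-2k}$, turns $z^{2k}$ into $z^{-2k}$, and replaces the sign factor $(-1)^{k+r_2}$ by $(-1)^{-k+r_2}$. Since $(-1)^{-k+r_2}=(-1)^{k+r_2}$, the resulting relation reads
\begin{align*}
\mathfrak{U}_{\mathbb{F}, 1-2k}(z) = (-1)^{k+r_2} z^{-2k}\, \mathfrak{U}_{\mathbb{F}, 1-2k}\!\left(-\tfrac{1}{z}\right) + \mathfrak{R}(z).
\end{align*}
Multiplying through by $z^{2k}$ gives exactly the left- and right-hand sides of \eqref{modular relation for imag field}, plus an error term $z^{2k}\mathfrak{R}(z)$ which I need to show is zero.

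Second, I would identify the exceptional case. Under the substitution $k\mapsto -k$, the non-trivial branch of \eqref{Residue at 1 for imag field}, namely $(k,r_1,r_2)=(-1,0,1)$, becomes $(k,r_1,r_2)=(1,0,1)$ for the positive index $k$ of the corollary. Since $\mathbb{F}$ is assumed purely imaginary of degree $2r_2$, we automatically have $r_1=0$, so this condition reduces to $(k,r_2)=(1,1)$. The hypothesis $(k,r_2)\neq(1,1)$ therefore forces $\mathfrak{R}(z)=0$, which yields the stated modular relation.

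There is no genuine obstacle here; the argument is essentially bookkeeping. The two points that need care are the parity computation $(-1)^{-k+r_2}=(-1)^{k+r_2}$ and the verification that the single exceptional case in \eqref{Residue at 1 for imag field} corresponds precisely to the case excluded by the hypothesis of the corollary. Both are immediate once the substitution $k\mapsto-k$ is tracked carefully.
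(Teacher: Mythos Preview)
Your proposal is correct and matches the paper's own proof essentially step for step: replace $k$ by $-k$ in \eqref{for k<0_imaginary field}--\eqref{Residue at 1 for imag field}, use $(k,r_2)\neq(1,1)$ to kill $\mathfrak{R}(z)$, and multiply through by $z^{2k}$. The only things you have added are the explicit parity check $(-1)^{-k+r_2}=(-1)^{k+r_2}$ and the observation that $r_1=0$ is automatic for a purely imaginary field, both of which the paper leaves implicit.
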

\begin{remark}\label{when r_2=1}
Moreover,  substituting $z=i$ in \eqref{modular relation for imag field} and considering $r_2$ as an odd  positive integer,  we obtain the following interesting exact evaluation:
\begin{align}\label{Exact evaluation for purely imag field}
\sum_{n=1}^{\infty}\sigma_{\mathbb{F}, 2k -1}(n)V\left(\frac{(2\pi)^{2 r_2} n}{D} \bigg| \bar{0}_{2 r_2} \right)= - \frac{h_\mathbb{F} R_\mathbb{F} \zeta_{\mathbb{F}}^{(r_2)}(1-2k)}{w_\mathbb{F}(r_2)!}.  
\end{align}
This result is true for purely imaginary number fields and it can be considered as an analogue of Corollary \ref{Evaluation at z=i},  which is true only for real number fields.  
In particular,  when $\mathbb{F}=\mathbb{Q}(\sqrt{-m} )$,  then for $k \geq 3$,  we have
\begin{align}\label{Exact evaluation_quad_imag_i}
\sum_{n=1}^{\infty}\sigma_{\mathbb{F}, 2k -1}(n)  K_{0} \left( 2 \pi \sqrt{\frac{n}{m}}   \right) =   \frac{1}{2}\zeta_{\mathbb{F}}(0) \zeta'_{\mathbb{F}}(1-2k). 
\end{align}
\end{remark}


\subsection{A number field analogue of transformation formula for Dedekind eta function}

One of the key observations is that our main result,  i.e.,  Theorem \ref{DB} loses its validity for  $k=0$ as $\zeta_{\mathbb{F}}(2k+1)$ exhibits a simple pole at $1$.  Therefore,  corresponding $k=0$, we must handle it separately.  Quite interestingly,  in this case,  we will derive a  number field analogue of transformation formula for Dedekind eta function $\eta(z)$,  which is a half integral weight modular form.
\begin{theorem}\label{k=0 case}
Let $\mathbb{F}$ be a number field of degree $d$ and  $\mathfrak{F}_{\mathbb{F}, 1}(z)$ be the infinite series defined as in \eqref{Infinite series F}.  Then we have
\begin{align*}
\mathfrak{T}_{\mathbb{F}}(z) & = (-1)^{r_2} \mathfrak{T}_{\mathbb{F}} \left(-\frac{1}{z}\right) + \mathfrak{R}_{0}(z),  
\end{align*}
where 
\begin{align*}
\mathfrak{T}_{\mathbb{F}}(z):=  \mathfrak{F}_{\mathbb{F}, 1}(z) - \mathfrak{R}_1(z), 
\end{align*}
with 
\begin{align*}
\mathfrak{R}_{0}(z)  = \frac{1}{(r_2 + 1)!}{\displaystyle \lim_{s \rightarrow 0}} \frac{{\rm d}^{r_2 + 1}}{{\rm d}s^{r_2 + 1}}\left(s^{r_2+2}\Lambda_{\mathbb{F}, 0}(s)(-iz)^{-s}\right),  \quad
\mathfrak{R}_1(z) = H_{\mathbb{F}} \zeta_{\mathbb{F}}(2) \frac{i D}{(2\pi)^{d}z}.
\end{align*}
\end{theorem}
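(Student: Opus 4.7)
The argument parallels the proof of Theorem \ref{DB}, specialized to the exceptional case $k = 0$ where the factor $\zeta_{\mathbb{F}}(s+1)$ acquires an additional simple pole at $s = 0$. The starting point is the representation, valid for any $c > 1$ and $z \in \mathbb{H}$,
\begin{align*}
\mathfrak{F}_{\mathbb{F},1}(z) \;=\; \frac{1}{2\pi i}\int_{(c)}\Lambda_{\mathbb{F},0}(s)\,(-iz)^{-s}\,ds,
\end{align*}
obtained by inserting the Mellin--Barnes representation $V(w\mid \bar{0}_d) = \tfrac{1}{2\pi i}\int_{(c)}\Gamma(s)^d w^{-s}\,ds$ into the definition of $\mathfrak{F}_{\mathbb{F},1}$ and using the Dirichlet series \eqref{Dirichlet series for generalized divisor function} at $\ell = -1$, namely $\sum_n \sigma_{\mathbb{F},-1}(n)/n^s = \zeta_{\mathbb{F}}(s)\zeta_{\mathbb{F}}(s+1)$, with absolute convergence justifying the interchange of sum and integral.

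I would then shift the line of integration to the left of $s = 0$, choosing a shifted line so that only the poles at $s = 1$ and $s = 0$ are crossed. At $s = 1$ the pole is simple: $\zeta_{\mathbb{F}}(s)$ contributes its residue $H_{\mathbb{F}}$, all other factors are regular, and the residue evaluates directly to $\mathfrak{R}_1(z)$. At $s = 0$ the pole has order
\begin{align*}
d + 1 - r \;=\; (r_1 + 2r_2) + 1 - (r_1 + r_2 - 1) \;=\; r_2 + 2,
\end{align*}
accumulated from the order-$d$ pole of $\Gamma(s)^d$, the simple pole of $\zeta_{\mathbb{F}}(s+1)$, and the zero of order $r = r_1 + r_2 - 1$ of $\zeta_{\mathbb{F}}(s)$. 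The standard higher-derivative residue formula then produces $\mathfrak{R}_0(z)$ exactly as stated.

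To identify the integral on the shifted line with $(-1)^{r_2}\bigl(\mathfrak{F}_{\mathbb{F},1}(-1/z) - \mathfrak{R}_1(-1/z)\bigr)$, I would invoke the number-field analogue of the Chandrasekharan--Narasimhan identity advertised in the abstract, which packages the functional equation $\xi_{\mathbb{F}}(s) = \xi_{\mathbb{F}}(1-s)$ together with the duplication formula and the reflection formula $\Gamma(s)\Gamma(1-s) = \pi/\sin(\pi s)$ into a clean transformation rule for $\Lambda_{\mathbb{F},0}(s)$ under $s \mapsto -s$. Under this substitution the pair $(\zeta_{\mathbb{F}}(s),\zeta_{\mathbb{F}}(s+1))$ becomes $(\zeta_{\mathbb{F}}(-s),\zeta_{\mathbb{F}}(1-s))$, each factor being the functional-equation partner of the \emph{other} --- which is exactly why $s \mapsto -s$ (rather than $s \mapsto 1-s$) is the right symmetry here. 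The accumulated trigonometric factors combine to produce precisely the sign $(-1)^{r_2}$, the $\Gamma$-factors collapse back to $\Gamma(s)^d$, and the branches are matched using $(-iz)(i/z) = 1$ (so $(-iz)^{s} = (-i(-1/z))^{-s}$ under the principal logarithm on $\mathbb{H}$). Rearranging yields the theorem.

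The main obstacle is the residue at $s = 0$: tracking the Laurent expansion of $\Gamma(s)^d \zeta_{\mathbb{F}}(s)\zeta_{\mathbb{F}}(s+1)((2\pi)^d/D)^{-s}(-iz)^{-s}$ to order $r_2 + 2$ and verifying that the compact derivative formula for $\mathfrak{R}_0(z)$ captures every lower-order contribution, together with the sign-tracking in Step~3. A satisfying consistency check is the case $\mathbb{F} = \mathbb{Q}$ (so $r_1 = 1$, $r_2 = 0$, $d = 1$): a short Laurent expansion using $\Gamma(s+1) = 1 - \gamma s + O(s^2)$, $s\zeta(s+1) = 1 + \gamma s + O(s^2)$, and $\zeta(s) = -\tfrac{1}{2} - \tfrac{s}{2}\log(2\pi) + O(s^2)$ gives $\mathfrak{R}_0(z) = \tfrac{1}{2}\log(-iz)$ and $\mathfrak{R}_1(z) = \pi i/(12z)$. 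Since $\mathfrak{F}_{\mathbb{Q},1}(z) = -\log\eta(z) + \pi i z/12$, Theorem \ref{k=0 case} then collapses to $\log\eta(-1/z) = \log\eta(z) + \tfrac{1}{2}\log(-iz)$, the classical modular transformation of the Dedekind eta function --- the promised $\eta$-analogue.
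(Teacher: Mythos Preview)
Your proposal is correct and follows essentially the same approach as the paper: represent $\mathfrak{F}_{\mathbb{F},1}(z)$ as the inverse Mellin integral of $\Lambda_{\mathbb{F},0}(s)(-iz)^{-s}$, shift the contour leftward, compute the residues (simple at $s=1$, order $r_2+2$ at $s=0$), and invoke the symmetry $\Lambda_{\mathbb{F},0}(s)=(-1)^{r_2}\Lambda_{\mathbb{F},0}(-s)$ --- exactly the paper's Lemma~\ref{Functional Equation for Product of Zeta Functions} at $k=0$ --- to convert the remaining integral. The only cosmetic difference is that the paper shifts all the way to $\Re(s)\in(-2,-1)$, thereby crossing the simple pole at $s=-1$ as well and recording its residue $R_{-1}^{(1)}(y)=(-1)^{r_2+1}R_1^{(1)}(1/y)$ directly, whereas you stop in $(-1,0)$ and must (implicitly) shift once more after the reflection $s\mapsto -s$ to move the line from $(0,1)$ back past $t=1$ --- this second shift is what produces your $-\mathfrak{R}_1(-1/z)$ term, and the two bookkeeping schemes are equivalent.
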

In particular,   we have the following modular transformation property for totally real number fields.  
\begin{corollary}\label{k=0 totally real field}
Let $\mathbb{F}$ be a totally real number field of degree $r_1$ and  $\mathfrak{F}_{\mathbb{F}, 1}(z)$ be the infinite series defined in \eqref{Infinite series F}.  Let $H_{\mathbb{F}},  \gamma_\mathbb{F}$ be the constants defined in \eqref{Laurent_1}. 
Then we have 
\begin{align}
\mathfrak{F}_{\mathbb{F}, 1}(z) - \mathfrak{F}_{\mathbb{F}, 1}\left(-\frac{1}{z}\right)& = a_0 \gamma_{\mathbb{F}} - a_0 H_{\mathbb{F}} \left\{ r_1 \gamma + \log\left(- \frac{(2 \pi)^{r_1} i z}{D}  \right) \right\}  + a_1 H_{\mathbb{F}} \nonumber   \\
&  + \mathfrak{R}_1(z) - \mathfrak{R}_1 \left(- \frac{1}{z} \right),  \label{totally real field_k=0}
\end{align}
where $$\mathfrak{R}_1(z) =  \frac{i D H_{\mathbb{F}} \zeta_{\mathbb{F}}(2)}{(2\pi)^{r_1}z}, \quad a_0 = \frac{\zeta_{\mathbb{F}}^{(r_1-1)}(0)  }{(r_1-1)!}=C_\mathbb{F}, \quad a_1= \frac{\zeta_{\mathbb{F}}^{(r_1)}(0)  }{(r_1)!}. $$
\end{corollary}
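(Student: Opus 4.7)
The plan is to deduce Corollary \ref{k=0 totally real field} by specializing Theorem \ref{k=0 case} to the totally real case ($r_2 = 0$, $d = r_1$) and then carrying out an explicit Laurent-series evaluation of the residual term $\mathfrak{R}_0(z)$. With $r_2 = 0$ the sign $(-1)^{r_2}$ becomes $+1$, and substituting $\mathfrak{T}_{\mathbb{F}}(z) = \mathfrak{F}_{\mathbb{F},1}(z) - \mathfrak{R}_1(z)$ into Theorem \ref{k=0 case} and rearranging yields
\begin{equation*}
\mathfrak{F}_{\mathbb{F},1}(z) - \mathfrak{F}_{\mathbb{F},1}\bigl(-\tfrac{1}{z}\bigr) = \mathfrak{R}_0(z) + \mathfrak{R}_1(z) - \mathfrak{R}_1\bigl(-\tfrac{1}{z}\bigr).
\end{equation*}
Since $\mathfrak{R}_1$ is already an explicit rational function of $z$, the entire task reduces to computing $\mathfrak{R}_0(z) = \lim_{s\to 0}\frac{d}{ds}\bigl(s^2\Lambda_{\mathbb{F},0}(s)(-iz)^{-s}\bigr)$.

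For the local analysis at $s=0$ I would record four expansions: first $\Gamma(s)^{r_1} = s^{-r_1}\bigl(1 - r_1 \gamma s + O(s^2)\bigr)$; second, since $r = r_1 - 1$ in the totally real case, \eqref{Laurent series_at s=0_1st coeff} gives the Taylor expansion $\zeta_{\mathbb{F}}(s) = a_0 s^{r_1-1} + a_1 s^{r_1} + O(s^{r_1+1})$ with $a_0 = C_{\mathbb{F}}$ and $a_1 = \zeta_{\mathbb{F}}^{(r_1)}(0)/r_1!$; third, the class number formula \eqref{Class number formula} gives $\zeta_{\mathbb{F}}(s+1) = H_{\mathbb{F}}/s + \gamma_{\mathbb{F}} + O(s)$; fourth, combining the two power factors,
\begin{equation*}
\bigl((2\pi)^{r_1}/D\bigr)^{-s}(-iz)^{-s} = 1 - s\log\bigl(-(2\pi)^{r_1}iz/D\bigr) + O(s^2).
\end{equation*}

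The key cancellation is that the order-$(r_1-1)$ zero of $\zeta_{\mathbb{F}}(s)$ kills $r_1-1$ of the $r_1$ poles of $\Gamma(s)^{r_1}$, leaving $\Gamma(s)^{r_1}\zeta_{\mathbb{F}}(s) = a_0/s + (a_1 - r_1\gamma a_0) + O(s)$. Multiplying by $\zeta_{\mathbb{F}}(s+1)$ produces a double pole with principal part $a_0 H_{\mathbb{F}}/s^2$, and after the $s^2$ normalization and multiplication by the Taylor factor above, reading off the coefficient of $s^1$ yields
\begin{equation*}
\mathfrak{R}_0(z) = a_0 \gamma_{\mathbb{F}} - a_0 H_{\mathbb{F}}\bigl\{r_1 \gamma + \log\bigl(-(2\pi)^{r_1}iz/D\bigr)\bigr\} + a_1 H_{\mathbb{F}},
\end{equation*}
which is exactly the non-$\mathfrak{R}_1$ contribution to the right-hand side of \eqref{totally real field_k=0}. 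The main obstacle is the bookkeeping: the quantity of interest sits at the $s^1$ level of a product whose four factors have individually divergent Laurent pieces that only cooperate at order $-2$, so each factor must be carried two orders beyond its leading behaviour before the cross terms can be safely collected. No further analytic input is needed beyond Theorem \ref{k=0 case} and the standard Laurent data for $\Gamma$ and $\zeta_{\mathbb{F}}$.
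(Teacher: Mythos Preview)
Your proposal is correct and follows essentially the same route as the paper: specialize Theorem~\ref{k=0 case} to $r_2=0$, rearrange to isolate $\mathfrak{R}_0(z)$, and compute that residue by multiplying the Laurent expansions of $\Gamma(s)^{r_1}$, $\zeta_{\mathbb{F}}(s)$, $\zeta_{\mathbb{F}}(s+1)$, and the exponential factor near $s=0$. The only cosmetic difference is that you record the intermediate product $\Gamma(s)^{r_1}\zeta_{\mathbb{F}}(s)=a_0/s+(a_1-r_1\gamma a_0)+O(s)$ explicitly, whereas the paper combines all four factors in one step.
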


\begin{remark}
Quite surprisingly,  substituting $z=i$ in \eqref{totally real field_k=0} and using the relation between $H_\mathbb{F}$ and $C_\mathbb{F}$,  we obtain a connection between class number of a totally real number field of degree $r_1$ and the constant term $\gamma_{\mathbb{F}}$ of the Laurent series expansion \eqref{Laurent_1}  of the Dedekind zeta function at $s=1$.  Mainly,  we obtain
\begin{align}\label{another formula for class number}
C_\mathbb{F}=  \frac{ a_1-A \gamma_{\mathbb{F}}  }{  r_1 \gamma + \log\left( \frac{(2 \pi)^{r_1} }{D}  \right) },
\end{align}
where $A= \frac{\sqrt{D}}{2^{r_1} (2\pi)^{r_2}}$.  This suggests that finding a fomula for class number is also connected with the Kronceker's limit formula for the Dedekind zeta function.  


\end{remark}
Further,  letting $\mathbb{F} = \mathbb{Q}$ in Corollary \ref{k=0 totally real field}, we have the following result.
\begin{corollary}\label{eta identity}
For any complex number $z \in \mathbb{H}$, we have
\begin{align}\label{Equivalent eta identity}
\sum_{n=1}^{\infty} \frac{\sigma(n)}{n} e^{2\pi i n z} -  \sum_{n=1}^{\infty} \frac{\sigma(n)}{n} e^{- \frac{2\pi i n }{z}} = \frac{i \pi z^2 + i\pi}{12z} + \frac{1}{2}\log(-i z), 
\end{align}
which is equivalent to the logarithm of the transformation formula for the Dedekind eta function $\eta(z)$, namely,
\begin{align*}
\eta\left(- \frac{1}{z} \right)= \sqrt{-i z} \, \eta(z). 
\end{align*}
\end{corollary}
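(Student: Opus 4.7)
The plan is to specialize Corollary \ref{k=0 totally real field} to the case $\mathbb{F}=\mathbb{Q}$ and then translate the resulting identity into the logarithmic form of the Dedekind eta transformation.

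First I would read off each ingredient for $\mathbb{F}=\mathbb{Q}$: we have $d=r_1=1$, $r_2=0$, $D=1$, $w_{\mathbb{Q}}=2$, $R_{\mathbb{Q}}=h_{\mathbb{Q}}=1$, so $H_{\mathbb{Q}}=1$ and $C_{\mathbb{Q}}=-1/2$. The Laurent constant $\gamma_{\mathbb{F}}$ reduces to the Euler--Mascheroni constant $\gamma$. Since $r_2=0$, the Steen function in \eqref{Infinite series F} collapses via \eqref{Voronoi-exp} to $V(-2\pi inz\,|\,0)=e^{2\pi inz}$, and the generalized divisor function \eqref{Gupta_Pandit} becomes $\sigma_{\mathbb{Q},-1}(n)=\sum_{d|n}d^{-1}=\sigma(n)/n$. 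Thus
\[
\mathfrak{F}_{\mathbb{Q},1}(z)=\sum_{n=1}^{\infty}\frac{\sigma(n)}{n}e^{2\pi inz}.
\]
The constants in Corollary \ref{k=0 totally real field} simplify to $a_0=\zeta(0)=-1/2$, $a_1=\zeta'(0)=-\tfrac{1}{2}\log(2\pi)$, and $\mathfrak{R}_1(z)=\frac{i\pi}{12z}$ using $\zeta(2)=\pi^2/6$.

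Next I would substitute these values into \eqref{totally real field_k=0}. The $\gamma$ contributions cancel: $a_0\gamma-a_0 H_{\mathbb{Q}}\gamma=0$. What remains from the bracketed piece is $-a_0\log(-2\pi iz)+a_1=\tfrac{1}{2}\log(-2\pi iz)-\tfrac{1}{2}\log(2\pi)=\tfrac{1}{2}\log(-iz)$. The boundary terms combine as
\[
\mathfrak{R}_1(z)-\mathfrak{R}_1(-1/z)=\frac{i\pi}{12z}+\frac{i\pi z}{12}=\frac{i\pi(z^2+1)}{12z},
\]
which together with $\tfrac{1}{2}\log(-iz)$ produces the right-hand side of \eqref{Equivalent eta identity}.

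Finally, to verify that \eqref{Equivalent eta identity} is indeed logarithmically equivalent to $\eta(-1/z)=\sqrt{-iz}\,\eta(z)$, I would expand the product definition $\eta(z)=e^{\pi iz/12}\prod_{n\geq 1}(1-e^{2\pi inz})$ using $-\log(1-q^n)=\sum_{m\geq 1}q^{nm}/m$ and the rearrangement $\sum_{m\mid N}1/m=\sigma(N)/N$ to obtain $\log\eta(z)=\tfrac{\pi iz}{12}-\sum_{n\geq 1}\tfrac{\sigma(n)}{n}e^{2\pi inz}$. Writing the analogous expression for $\log\eta(-1/z)$ and subtracting exactly reproduces \eqref{Equivalent eta identity} iff the eta transformation holds.

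The only subtlety I expect is bookkeeping the branches of the logarithm to confirm the factor $\tfrac{1}{2}\log(-iz)$ is the correct principal branch for $z\in\mathbb{H}$ matching $\sqrt{-iz}$; beyond that, the derivation is essentially plugging in constants and simplifying, so there is no real analytic obstacle.
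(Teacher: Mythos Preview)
Your proposal is correct and follows essentially the same approach as the paper: specialize Corollary~\ref{k=0 totally real field} to $\mathbb{F}=\mathbb{Q}$, plug in the standard values $H_{\mathbb{Q}}=1$, $\gamma_{\mathbb{Q}}=\gamma$, $a_0=\zeta(0)=-\tfrac{1}{2}$, $a_1=\zeta'(0)=-\tfrac{1}{2}\log(2\pi)$, $\zeta(2)=\pi^2/6$, and simplify. Your additional paragraph deriving $\log\eta(z)=\tfrac{\pi i z}{12}-\sum_{n\ge 1}\tfrac{\sigma(n)}{n}e^{2\pi i n z}$ to make the equivalence with the eta transformation explicit is a detail the paper leaves unstated, but otherwise the two arguments coincide.
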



\section{Required Tools}
In this section,  we state a few essential results which will be frequently used in the proof of the main results.
The gamma function $\Gamma(z)$ satisfies the following functional equation and reflection formula:
\begin{align}
\Gamma(z+1) &= z\Gamma(z),  \quad z\in \mathbb{C}, \label{Functional eqn for Gamma} \\
\Gamma(z)\Gamma(1-z) &= \frac{\pi}{\sin\pi z},  \quad z \in \mathbb{C}\backslash \mathbb{Z}\label{Euler's Reflection Formula} \\
\Gamma(z)\Gamma\left(z + \frac{1}{2}\right)  &= 2^{1-2z}\sqrt{\pi}\Gamma(2z).  \label{Legendre's Duplication Formula}
\end{align}

We now discuss analytic properties of the Dedekind zeta function $\zeta_{\mathbb{F}}(s)$.

\begin{lemma}
The Dedekind zeta function $\zeta_{\mathbb{F}}(s)$ has an analytic continuation in the whole complex plane except for a simple pole at $s=1$. It also satisfies the following functional equation relating its values at $s$ and $1-s$,  
\begin{align}\label{Dedekind_zeta_functional_equation}
\Omega_{\mathbb{F}}(s) = \Omega_{\mathbb{F}}(1-s),
\end{align}
where $\Omega_{\mathbb{F}}(s) = \left( \frac{D}{\pi^d 4^{r_2}} \right)^{\frac{s}{2}}  \Gamma\left(\frac{s}{2} \right)^{r_1}  \Gamma(s)^{r_2}\zeta_{\mathbb{F}}(s)$.
\end{lemma}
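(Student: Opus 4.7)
The plan is to follow Hecke's classical route via higher-dimensional theta functions and a Mellin integral representation of $\Omega_{\mathbb{F}}(s)$. First, I would decompose $\zeta_{\mathbb{F}}(s)$ into partial zeta functions indexed by the ideal class group, $\zeta_{\mathbb{F}}(s) = \sum_{[\mathfrak{c}]} \zeta_{\mathbb{F}}(s, [\mathfrak{c}])$, with $\zeta_{\mathbb{F}}(s, [\mathfrak{c}]) = \sum_{\mathfrak{a} \in [\mathfrak{c}]} \mathfrak{N}(\mathfrak{a})^{-s}$. Fixing an integral ideal $\mathfrak{b}$ in the inverse class $[\mathfrak{c}]^{-1}$, every $\mathfrak{a} \in [\mathfrak{c}]$ has the form $\mathfrak{b}^{-1}(\alpha)$ for some nonzero $\alpha \in \mathfrak{b}$ determined up to a unit, so the partial zeta function becomes a sum over $(\mathfrak{b} \setminus \{0\})/\mathcal{O}_{\mathbb{F}}^{\times}$ of $|\mathfrak{N}(\alpha)|^{-s}$ times a fixed power of $\mathfrak{N}(\mathfrak{b})$.

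Next, I would apply the Mellin identities $\Gamma(s/2)(\pi x^{2})^{-s/2} = \int_{0}^{\infty} e^{-\pi x^{2} t}\,t^{s/2-1}\,dt$ at each of the $r_1$ real places and $\Gamma(s)(2\pi |z|^{2})^{-s} = \int_{0}^{\infty} e^{-2\pi |z|^{2} t}\,t^{s-1}\,dt$ at each of the $r_2$ complex places. Assembling them converts each factor $|\mathfrak{N}(\alpha)|^{-s}$ into an $(r_1 + r_2)$-fold integral of $\exp\bigl(-\pi \sum_{v} t_{v} |\alpha^{(v)}|^{e_{v}}\bigr)$. Combining the unit quotient in $\alpha$ with a fundamental domain $F$ for the logarithmic unit action on $(\mathbb{R}_{>0})^{r_1 + r_2}$, the mixed sum-and-integral reassembles into a sum over all nonzero $\alpha \in \mathfrak{b}$ integrated against a single radial variable $t > 0$. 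This produces a theta-type series $\widetilde{\Theta}_{\mathfrak{b}}(t) = \sum_{\alpha \in \mathfrak{b} \setminus \{0\}} e^{-\pi Q_{\mathfrak{b}}(\alpha)\,t^{2/d}}$ with $Q_{\mathfrak{b}}$ the positive-definite form built from the trace pairing, and after splitting the $t$-integral at $1$, isolating the $\alpha = 0$ contribution (which supplies elementary terms $1/(s-1)$ and $-1/s$), and applying Poisson summation on the Minkowski lattice $\mathfrak{b} \subset \mathbb{F} \otimes \mathbb{R}$ in the form $\widetilde{\Theta}_{\mathfrak{b}}(t^{-1}) = t^{d/2}\,\widetilde{\Theta}_{\mathfrak{b}^{-1}\mathfrak{d}^{-1}}(t)$ (where $\mathfrak{d}$ is the different of $\mathbb{F}/\mathbb{Q}$, with $\mathfrak{N}(\mathfrak{d}) = D$), I obtain a representation of $\Omega_{\mathbb{F}}(s, [\mathfrak{c}])$ that is manifestly meromorphic on $\mathbb{C}$ and symmetric under $s \leftrightarrow 1 - s$ with $[\mathfrak{c}] \leftrightarrow [\mathfrak{b}^{-1}\mathfrak{d}^{-1}]$.

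Summing over classes $[\mathfrak{c}]$ then permutes the pairs inside $\mathrm{Cl}(\mathbb{F})$, so both sides recover the full $\Omega_{\mathbb{F}}$ and yield the functional equation $\Omega_{\mathbb{F}}(s) = \Omega_{\mathbb{F}}(1 - s)$; the elementary pole contributions combine with $\Gamma(s/2)^{r_1} \Gamma(s)^{r_2}$ to leave only a simple pole of $\zeta_{\mathbb{F}}$ at $s = 1$, the singularity at $s = 0$ being cancelled by the $r_1 + r_2 - 1$ trivial zeros of the gamma factors. The main obstacle is step two: constructing a fundamental domain $F$ for the unit action via the logarithmic embedding $\ell \colon \mathcal{O}_{\mathbb{F}}^{\times} \to \ker(\mathrm{tr}) \subset \mathbb{R}^{r_1 + r_2}$ and tracking the Jacobian so that the sum over $(\mathfrak{b} \setminus \{0\})/\mathcal{O}_{\mathbb{F}}^{\times}$ paired with the integral over $F$ becomes a free sum over $\mathfrak{b} \setminus \{0\}$ against a single radial $t > 0$, up to the expected factor $w_{\mathbb{F}}/R_{\mathbb{F}}$. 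Once this bookkeeping is complete, the Poisson summation step is routine, since the trace pairing identifies the $\mathbb{Z}$-dual of $\mathfrak{b}$ as $\mathfrak{b}^{-1}\mathfrak{d}^{-1}$ and the Gaussian Fourier transform is classical on $\mathbb{R}^{r_1} \times \mathbb{C}^{r_2}$.
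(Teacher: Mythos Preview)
The paper does not prove this lemma at all: it is stated in the ``Required Tools'' section as a standard fact about $\zeta_{\mathbb{F}}(s)$ and is used as input for the subsequent Lemma~\ref{Functional Equation for Product of Zeta Functions}. So there is no ``paper's own proof'' to compare against; you are supplying a proof where the paper simply quotes the classical result.

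Your outline is Hecke's standard argument and is essentially correct. One wording slip at the end: you write that the singularity at $s=0$ is ``cancelled by the $r_1+r_2-1$ trivial zeros of the gamma factors,'' but $\Gamma(s/2)^{r_1}\Gamma(s)^{r_2}$ has no zeros --- it has a pole of order $r_1+r_2$ at $s=0$. The correct statement is that the integral representation shows $\Omega_{\mathbb{F}}(s)$ has only a \emph{simple} pole at $s=0$, and dividing out the gamma factor (pole of order $r_1+r_2$) forces $\zeta_{\mathbb{F}}(s)$ to have a zero of order $r_1+r_2-1$ there; in particular $\zeta_{\mathbb{F}}$ is regular at $s=0$. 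This is exactly the fact recorded after the lemma in \eqref{Laurent series_at s=0_1st coeff} and \eqref{Laurent_0}. With that correction the sketch is sound, though of course the unit-lattice/fundamental-domain bookkeeping you flag in step two is where all the genuine work lies.
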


The Dedekind zeta function $\zeta_{\mathbb{F}}(s)$ satisfies the following Laurent series expansion at $s=1$:
\begin{align}\label{Laurent_1}
\zeta_{\mathbb{F}}(s)=  \frac{H_{\mathbb{F}}}{s-1} + \gamma_{\mathbb{F}} + O(s-1).  
\end{align}

Utilizing functional equation \eqref{Dedekind_zeta_functional_equation} of $\zeta_{\mathbb{F}}(s)$,  one can verify that it has a zero at $s=0$ of order $r=r_1 +r_2-1$.  
Therefore,  the following Laurent series expansion at $s=0$ holds:
\begin{align}\label{Laurent_0}
\zeta_{\mathbb{F}}(s) = a_0 s^{r_1 +r_2-1} + a_1 s^{r_1 +r_2} + O\left( s^{r_1 +r_2+1} \right),  
\end{align}
where $a_0 =  \frac{\zeta^{(r_1+r_2-1)}(0)}{(r_1+r_2-1)!}$ and $a_1 =  \frac{\zeta^{(r_1+r_2)}(0)}{(r_1+r_2)!}$.  From \eqref{Laurent series_at s=0_1st coeff},  one can observe that $a_0$ is nothing but the constant $C_\mathbb{F}$.  

Now we mention an interesting identity due to Chandrasekharan and Narasimhan \cite[Equation (57)]{CN61}.  For any integer $k$,  one can prove that
\begin{align*}
\Lambda_{k}(s)= (-1)^k \Lambda_{k}(-s-2k), 
\end{align*}
where $\Lambda_k(s)= (2 \pi)^{-s} \Gamma(s) \zeta(s) \zeta(s+2k+1).$
We generalize this identity for Dedekind zeta function $\zeta_{\mathbb{F}}(s)$.  Mainly,  we prove the below result,  which will play a crucial role in proving our main identity.  
\begin{lemma}\label{Functional Equation for Product of Zeta Functions}
Let $\mathbb{F}$ be a number field of degree $d$ and $\zeta_{\mathbb{F}}(s)$ be the Dededkind zeta function defined in \eqref{Dedekind}.  For any $k \in \mathbb{Z}$, we have
\begin{align*}
\Lambda_{\mathbb{F}, k}(s) = (-1)^{kr_1 + r_2} \Lambda_{\mathbb{F}, k}(-s-2k),
\end{align*}
where $\Lambda_{\mathbb{F}, k}(s) = D^s(2 \pi)^{-ds} \Gamma(s)^d\zeta_{\mathbb{F}}(s)\zeta_{\mathbb{F}}(s+2k+1).$
\end{lemma}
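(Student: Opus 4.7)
The identity is a Dedekind-zeta analogue of Chandrasekharan--Narasimhan's scalar version, which follows by feeding the functional equation of $\zeta(s)$ into $\Lambda_k$. The natural plan here is to do the same: apply the functional equation $\Omega_{\mathbb{F}}(s)=\Omega_{\mathbb{F}}(1-s)$ to both factors $\zeta_{\mathbb{F}}(s)$ and $\zeta_{\mathbb{F}}(s+2k+1)$, substitute into $\Lambda_{\mathbb{F},k}(s)$, and compare the result with $\Lambda_{\mathbb{F},k}(-s-2k)$. Writing $A^2=D/(\pi^d 4^{r_2})=D\cdot 2^{r_1}/(2\pi)^d$, the functional equation reads $\zeta_{\mathbb{F}}(s)=A^{1-2s}\,\frac{\Gamma((1-s)/2)^{r_1}\Gamma(1-s)^{r_2}}{\Gamma(s/2)^{r_1}\Gamma(s)^{r_2}}\,\zeta_{\mathbb{F}}(1-s)$, and the corresponding transformation for $\zeta_{\mathbb{F}}(s+2k+1)$ replaces its argument by $-s-2k$.

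After inserting both relations into $\Lambda_{\mathbb{F},k}(s)$, I would form the ratio $\Lambda_{\mathbb{F},k}(s)/\Lambda_{\mathbb{F},k}(-s-2k)$. The algebraic factor simplifies cleanly: $A^{-4s-4k}\cdot D^{2s+2k}\cdot(2\pi)^{-d(2s+2k)}=2^{-r_1(2s+2k)}$, so the $D$ and $(2\pi)^d$ powers collapse and leave a lone power of $2^{-r_1(2s+2k)}$ to be absorbed. The remaining task is to show that
\[
2^{-r_1(2s+2k)}\,\frac{\Gamma(s)^{r_1+r_2}\,\Gamma(1-s)^{r_2}}{\Gamma(-s-2k)^{r_1+r_2}\,\Gamma(s+2k+1)^{r_2}}\cdot\frac{\Gamma((1-s)/2)^{r_1}\Gamma((-s-2k)/2)^{r_1}}{\Gamma(s/2)^{r_1}\Gamma((s+2k+1)/2)^{r_1}} \;=\; (-1)^{kr_1+r_2}.
\]

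The $r_1$- and $r_2$-parts decouple after one application of Legendre's duplication formula \eqref{Legendre's Duplication Formula} on $\Gamma(s)^{r_1}/\Gamma(-s-2k)^{r_1}$: the factor $2^{-r_1(2s+2k)}$ exactly matches the $2^{r_1(s-1)}/2^{r_1(-s-2k-1)}$ coming from duplication, and after cancellation the $r_1$-contribution reduces to $\bigl(\Gamma((1-s)/2)\Gamma((s+1)/2)/\Gamma((1-s-2k)/2)\Gamma((s+2k+1)/2)\bigr)^{r_1}$. Reflection formula \eqref{Euler's Reflection Formula} turns each of the two products into $\pi/\cos(\pi s/2)$ and $\pi/\cos(\pi s/2+\pi k)=(-1)^k\pi/\cos(\pi s/2)$, producing $(-1)^{kr_1}$. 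The $r_2$-part is even shorter: reflection gives $\Gamma(s)\Gamma(1-s)=\pi/\sin(\pi s)$ and $\Gamma(-s-2k)\Gamma(s+2k+1)=\pi/\sin(\pi(-s-2k))=-\pi/\sin(\pi s)$, so the ratio equals $(-1)^{r_2}$. Multiplying gives $(-1)^{kr_1+r_2}$, as required.

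The main obstacle is not conceptual but bookkeeping: confirming that the $D$, $(2\pi)^d$ and $2^{r_1}$ factors packaged inside $A^2=D\cdot 2^{r_1}/(2\pi)^d$ cancel precisely the excess powers of $2$ produced by Legendre duplication, so that only the two sign contributions $(-1)^{kr_1}$ and $(-1)^{r_2}$ survive. This cancellation is where the degree relation $d=r_1+2r_2$ is used essentially, and is the step I would present most carefully.
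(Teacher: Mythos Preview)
Your proof is correct and shares the paper's overall strategy---apply the functional equation of $\zeta_{\mathbb{F}}$ to both factors and reduce to a gamma-function identity via Legendre duplication---but the way you finish is genuinely different. After multiplying the two functional equations, the paper repeatedly applies $\Gamma(z+1)=z\Gamma(z)$ to shift $\Gamma\bigl(\tfrac{s+1}{2}+k\bigr)$, $\Gamma\bigl(-\tfrac{s}{2}-k\bigr)$, $\Gamma(s+2k+1)$, $\Gamma(-s-2k)$ back by $k$ (resp.\ $2k$) units, producing explicit polynomial factors in $s$ that are then matched and cancelled by hand; this makes the argument dependent on the sign of $k$, and the paper treats $k\ge 0$ and $k<0$ separately. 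You instead form the ratio $\Lambda_{\mathbb{F},k}(s)/\Lambda_{\mathbb{F},k}(-s-2k)$, apply duplication once to $\Gamma(s)/\Gamma(-s-2k)$, and then dispatch the remaining half-integer and integer gamma products with the reflection formula, reading off $(-1)^{kr_1}$ from $\cos(\pi s/2+\pi k)=(-1)^k\cos(\pi s/2)$ and $(-1)^{r_2}$ from $\sin(-\pi s-2\pi k)=-\sin(\pi s)$. Your route is shorter, treats all $k\in\mathbb{Z}$ uniformly, and makes transparent where the sign arises; the paper's route is more elementary in that it avoids the reflection formula altogether, at the cost of explicit polynomial bookkeeping and a case split.
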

\begin{proof}
We first prove this identity for any non-negative integer $k$. 
From the functional equation \eqref{Dedekind_zeta_functional_equation} of $\zeta_{\mathbb{F}}(s)$, we have
\begin{align}\label{Function_Eqn_for_Dedekind_Zeta_Fn}
&   \left( \frac{D}{\pi^d 4^{r_2}} \right)^{\frac{s}{2}}  \Gamma\left(\frac{s}{2}\right)^{r_1}\Gamma(s)^{r_2}\zeta_{\mathbb{F}}(s) =  \left( \frac{D}{\pi^d 4^{r_2}} \right)^{\frac{1-s}{2}}  \Gamma\left(\frac{1-s}{2}\right)^{r_1}\Gamma(1-s)^{r_2}\zeta_{\mathbb{F}}(1-s) .
\end{align}
Replace $s$ by $s + 2k + 1$ in \eqref{Function_Eqn_for_Dedekind_Zeta_Fn} to see that
\begin{align}\label{Function_Eqn_for_Dedekind_Zeta_Fn_(s+2k+1)}
 & \left( \frac{D}{\pi^d 4^{r_2}} \right)^{\frac{s+2k+1}{2}}  \Gamma\left(\frac{s+2k+1}{2}\right)^{r_1}\Gamma(s+2k+1)^{r_2}\zeta_{\mathbb{F}}(s+2k+1) \nonumber \\
 & =  \left( \frac{D}{\pi^d 4^{r_2}} \right)^{\frac{-s-2k}{2}  }  \Gamma\left( -\frac{s}{2}-k\right)^{r_1}\Gamma(-s-2k)^{r_2}\zeta_{\mathbb{F}}(-s-2k).
\end{align}
Now we multiply \eqref{Function_Eqn_for_Dedekind_Zeta_Fn} and \eqref{Function_Eqn_for_Dedekind_Zeta_Fn_(s+2k+1)} to get
{\allowdisplaybreaks \begin{align}\label{Product of two fnl eqn}
& \left( \frac{D}{\pi^d 4^{r_2}} \right)^{2s+2k} \left\{ \Gamma\left(\frac{s}{2}\right)  \Gamma\left(\frac{s+1}{2}+k\right) \right\}^{r_1}  \left\{ \Gamma(s)\Gamma(s+2k+1)  \right\}^{r_2}  \zeta_{\mathbb{F}}(s)\zeta_{\mathbb{F}}(s+2k+1) \nonumber \\
& =     \left\{ \Gamma\left(\frac{1}{2}- \frac{s}{2} \right)  \Gamma\left(-\frac{s}{2}-k \right) \right\}^{r_1}   \left\{ \Gamma(1-s) \Gamma(-s-2k)  \right\}^{r_2}   \zeta_{\mathbb{F}}(1-s)\zeta_{\mathbb{F}}(-s-2k). 
\end{align}}
To simplify the  above equation further,  we use the functional equation \eqref{Functional eqn for Gamma} of $\Gamma(s)$ repeatedly and find that,  for any non-negative integer $k$,   
\begin{align}\label{Gamma1}
\Gamma\left(\frac{s+1}{2}+ k\right) = \frac{1}{2^k}(s+2k-1)(s+2k-3) \cdots (s+1)\Gamma\left( \frac{s}{2} + \frac{1}{2}\right).
\end{align}
Again,  using \eqref{Functional eqn for Gamma} repeatedly,  one can show that
\begin{align}\label{Gamma2}
\Gamma\left(-\frac{s}{2}-k\right) = \frac{(-2)^k \Gamma\left(-\frac{s}{2}\right)}{(s+2k)(s+2k-2)\cdots (s+2)}.
\end{align}
Utilize \eqref{Gamma1},  \eqref{Gamma2} and duplication formula \eqref{Legendre's Duplication Formula} for $\Gamma(s)$ to derive
\begin{align}
\Gamma\left(\frac{s}{2}\right)\Gamma\left( \frac{s+1}{2} + k\right) 
&= \frac{2}{2^{s+k}}\sqrt{\pi}(s+2k-1)(s+2k-3)\cdots (s+1)\Gamma(s),  \label{Left Gamma1} \\
\Gamma\left(\frac{1}{2}- \frac{s}{2} \right)  \Gamma\left(-\frac{s}{2}-k \right)
&= \frac{(-1)^k 2^{1+s+k}\sqrt{\pi}\Gamma(-s)}{(s+2k)(s+2k-2) \cdots (s+2)}. \label{Left Gamma2}
\end{align}
We again employ \eqref{Functional eqn for Gamma} to see that,  for any non-negative integer $k$, 
\begin{align}
\Gamma(s)\Gamma(s+2k+1) &= s (s+1) \cdots (s+2k) \Gamma(s)^2,  \label{Right Gamma1} \\
\Gamma(-s-2k)\Gamma(1-s) 
&= \frac{-s(\Gamma(-s))^2}{(s+2k)(s+2k-1)\cdots (s+1)}.  \label{Right Gamma2}
\end{align}
Now substituting expressions from \eqref{Left Gamma1}-\eqref{Right Gamma2} in \eqref{Product of two fnl eqn} and upon simplification,  we have 
\begin{align}
& \left( \frac{D}{(2 \pi)^d} \right)^{2s+2k} \left\{ (s+1) (s+2) \cdots (s+2k) \right\}^{d} \Gamma(s)^d \zeta_{\mathbb{F}}(s) \zeta_{\mathbb{F}}(s+2k+1)  \nonumber \\
& = (-1)^{k r_1 +r_2} (\Gamma(-s))^d  \zeta_{\mathbb{F}}(1-s)\zeta_{\mathbb{F}}(-s-2k).  \label{Semifinal symmetric eqn}
\end{align}
Here we have used the fact that $r_1+2 r_2=d$. 
Applying \eqref{Functional eqn for Gamma} again,  one can check that
\begin{align}\label{Final Gamma}
\frac{\Gamma(-s)}{(s+2k)(s+2k-1)\ldots(s+2)(s+1)} = \Gamma(-s-2k).
\end{align}
Using \eqref{Final Gamma}  in \eqref{Semifinal symmetric eqn}, we obtain
\begin{align*}
& \left( \frac{D}{(2 \pi)^d} \right)^{2s+2k} \Gamma(s)^d \zeta_{\mathbb{F}}(s) \zeta_{\mathbb{F}}(s+2k+1)  \nonumber \\
& = (-1)^{k r_1 +r_2} \Gamma(-s-2k)^d  \zeta_{\mathbb{F}}(1-s)\zeta_{\mathbb{F}}(-s-2k). 
\end{align*}
Now letting $\Lambda_{\mathbb{F}, k}(s) = D^s(2 \pi)^{-ds} \Gamma(s)^d\zeta_{\mathbb{F}}(s)\zeta_{\mathbb{F}}(s+2k+1)$,  the above identity reduces to $$\Lambda_{\mathbb{F}, k}(s)= (-1)^{k r_1 +r_2} \Lambda_{\mathbb{F}, k}(-s-2k).   $$
This completes the proof of this lemma for any non-negative integer $k$.   In a similar fashion,  one can prove that this identity also holds for any negative integer as well.  
\end{proof}

In the next section,  we present proofs of our main results. 
\section{Proof of Main Results}

\begin{proof}[Theorem \rm{\ref{DB}}][]
Using the definition \eqref{Voronoi Steen function} of the Steen function,  for $y>0$,  we write
\begin{align}\label{1st_Integral}
\sum_{n=1}^{\infty}\sigma_{\mathbb{F}, -2k -1}(n)V\left(\frac{(2\pi)^d ny}{D} \bigg| \bar{0}_d \right) &= \sum_{n=1}^{\infty}\sigma_{\mathbb{F}, -2k -1}(n)\frac{1}{2\pi i}\int_{(c)} \Gamma(s)^d \left(\frac{(2\pi)^d ny}{D}\right)^{-s} {\rm d}s \nonumber \\
&= \frac{1}{2\pi i}\int_{(c)} \Gamma(s)^d \sum_{n=1}^{\infty}\frac{\sigma_{\mathbb{F}, -2k -1}(n)}{n^s}\left(\frac{(2\pi)^d y}{D}\right)^{-s}{\rm d}s,
\end{align}
where we choose the line of integration as $  \max\{1, -2k\} <  \Re(s)= c < \max\{1, -2k\} + \epsilon$,  with $0< \epsilon<1$,  so that the above Dirichlet series converges absolutely and uniformly.   Hence,  the interchange of summation and integration is justifiable.  Now utilize the definition of the Dirichlet series \eqref{Dirichlet series for generalized divisor function} in \eqref{1st_Integral},  to see that
\begin{align}\label{sum_to_integral}
\sum_{n=1}^{\infty}\sigma_{\mathbb{F}, -2k -1}(n)V\left(\frac{(2\pi)^d ny}{D} \bigg| \bar{0}_d \right) &= \frac{1}{2\pi i}\int_{(c)} \Gamma(s)^d \zeta_{\mathbb{F}}(s)\zeta_{\mathbb{F}}(s+2k+1)\left(\frac{(2\pi)^d y}{D}\right)^{-s} {\rm d}s.
\end{align}
Now our aim is to study the following vertical line integral,   
\begin{align*} 
 J_{\mathbb{F},  k}(y) = \frac{1}{2\pi i}\int_{(c)} \Gamma(s)^d \zeta_{\mathbb{F}}(s)\zeta_{\mathbb{F}}(s+2k+1)\left(\frac{(2\pi)^d y}{D}\right)^{-s} {\rm d}s.    
 \end{align*} 
From the definition \eqref{Lamda_F_k} of  $\Lambda_{\mathbb{F}, k}(s)$,  it is clear that the above integral can be rewritten as
\begin{align*}
J_{\mathbb{F},  k}(y) =  \frac{1}{2\pi i}\int_{(c)}   \Lambda_{\mathbb{F}, k}(s) y^{-s} {\rm d}s.    
\end{align*}
We now set up a rectangular contour $\mathcal{C}$ made up of the vertices $c-iT, c+iT, \alpha+iT$ and $\alpha-iT$ taken in counter-clockwise direction. We already have $c > \max\{1, -2k\}$ with $T$ being some large positive quantity and we wisely choose $$\min\{-1,  -2k-2\}<\alpha < \min\{0, -2k-1\}.$$ 
Before applying Cauchy's residue theorem,  we need to examine all the poles of $\Lambda_{\mathbb{F},  k}(s)$ inside the contour $\mathcal{C}$.   We know that $\Gamma(s)^d$ has poles of order $d$ at non-positive integers. 
Other poles will depend upon the value of $k$ which will affect location of vertices of $\mathcal{C}$.  Here we divide in two cases.  \\
\textbf{Case I)} Suppose $k > 0$.  
{\begin{center}
\begin{tikzpicture}[very thick,decoration={
  markings,
  mark=at position 0.6 with {\arrow{>}}}
 ] 
 \draw[thick,dashed,postaction={decorate}] (-4.2,-2)--(5,-2) node[below right, black] {$c-i T$};
 \draw[thick,dashed,postaction={decorate}] (5,-2)--(5,2) node[above right, black] {$c+iT$} ;
 \draw[thick,dashed,postaction={decorate}] (5,2)--(-4.2,2) node[left, black] {$\alpha+i T$}; 
 \draw[thick,dashed,postaction={decorate}] (-4.2,2)--(-4.2,-2) node[below left, black] {$\alpha-i T$}; 
 \draw[thick, <->] (-5,0) -- (6,0) coordinate (xaxis);
 \draw[thick, <->] (3.5,-3.5) -- (3.5,3.5)node[midway, above left, black] {\tiny0} coordinate (yaxis);
 \draw (2.5,0.1)--(2.5,-0.1) node[midway, above, black]{\tiny-1};
 \draw (1.5,0.1)--(1.5,-0.1) node[midway, above, black]{\tiny-2};
 \draw (0.5,0.1)--(0.5,-0.1) node[midway, above, black]{\tiny-3};
 \draw (-2.65,0.1)--(-2.65,-0.1) node[midway, above, black]{\tiny $-2k$} ;
 \draw (4.5,0.1)--(4.5,-0.1) node[midway, above, black]{\tiny1};
 \draw (-3.65,0.1)--(-3.65,-0.1) node[midway, above, black]{\tiny $-2k-1$};
 \draw (-4.85,0.1)--(-4.85,-0.1) node[midway, above, black]{\tiny $-2k-2$};
 \node[above] at (xaxis) {$\mathrm{Re}(s)$};
 \node[right] at (yaxis) {$\mathrm{Im}(s)$};

\end{tikzpicture}
\end{center}}
Now inside the contour $\mathcal{C}$,  for $\zeta_{\mathbb{F}}(s),\, s= 0$ is a zero of order $r_1+r_2 -1, \, s\in \{-2, -4, \ldots, -2k\}$ are zeros of order $r_1+r_2,\, s\in \{-1, -3, \ldots, -2k-1\}$ are zeros of order $r_2$ and $s=1$ is a simple pole. For $\zeta_{\mathbb{F}}(s+2k+1), s= -2k-1$ is a zero of order $r_1+r_2 -1$ and $s=-2k$ is a simple pole.
Hence final poles of  $\Lambda_{\mathbb{F}, k}(s)$  are at  $s = 0, -2k$  of order $r_2+1, s = 1,\, -2k-1$ are simple poles, $s\in \{-2, -4, \ldots, -2k+2\}$ are poles of order $r_2$ and $s\in \{-1, -3, \ldots, -2k+1\}$ are poles of order $r_1 + r_2$.\\
\textbf{Case II)} When $k < 0$. 

{\begin{center}
\begin{tikzpicture}[very thick,decoration={
  markings,
  mark=at position 0.6 with {\arrow{>}}}
 ] 
 \draw[thick,dashed,postaction={decorate}] (-3.4,-2)--(5,-2) node[below right, black] {$c-i T$};
 \draw[thick,dashed,postaction={decorate}] (5,-2)--(5,2) node[above right, black] {$c+iT$} ;
 \draw[thick,dashed,postaction={decorate}] (5,2)--(-3.4,2) node[left, black] {$\alpha+i T$}; 
 \draw[thick,dashed,postaction={decorate}] (-3.4,2)--(-3.4,-2) node[below left, black] {$\alpha-i T$}; 
 \draw[thick, <->] (-5,0) -- (6,0) coordinate (xaxis);
 \draw[thick, <->] (-2.9,-3.5) -- (-2.9,3.5)node[midway, above left, black] {\tiny0} coordinate (yaxis);
 \draw (-1.9,0.1)--(-1.9,-0.1) node[midway, above, black]{\tiny1};
 \draw (-0.9,0.1)--(-0.9,-0.1) node[midway, above, black]{\tiny2};
 \draw (0.1,0.1)--(0.1,-0.1) node[midway, above, black]{\tiny3};
 \draw (4.5,0.1)--(4.5,-0.1) node[midway, above, black]{\tiny $-2k$};
 \draw (3.5,0.1)--(3.5,-0.1) node[midway, above, black]{\tiny $-2k-1$};
 \draw (-3.9,0.1)--(-3.9,-0.1) node[midway, above, black]{\tiny $-1$};
 \node[above] at (xaxis) {$\mathrm{Re}(s)$};
 \node[right] at (yaxis) {$\mathrm{Im}(s)$};

\end{tikzpicture}
\end{center}}
We see that $s= 0$ is a zero of order $r_1+r_2 -1$ for $\zeta_{\mathbb{F}}(s)$ and a zero of order $r_2$ for $\zeta_{\mathbb{F}}(s+2k+1)$.  Note that $s=1$ is a simple pole for $\zeta_{\mathbb{F}}(s)$ and $s=-2k$ is a simple pole for $\zeta_{\mathbb{F}}(s+2k+1)$. 
Hence final poles for $\Lambda_{\mathbb{F}, k}(s)$, when $k < 0$,  are at  $s = 0, \, 1,\, -2k$,  which are all simple poles.

Now,  considering the above two cases and applying Cauchy's residue theorem, we have
\begin{align}\label{Int_along_contour}
\frac{1}{2\pi i}\int_{\mathcal{C}} \Lambda_{\mathbb{F}, k}(s) y^{-s} {\rm d}s  = \mathcal{R}, 
\end{align}
where $\Lambda_{\mathbb{F}, k}(s) =  \Gamma(s)^d\zeta_{\mathbb{F}}(s)\zeta_{\mathbb{F}}(s+2k+1)  (2\pi)^{-s} D^s $ and the residual term is given by
\begin{align}\label{residues for different k}
\mathcal{R} = \begin{cases}
 R_0^{(r_2+1)}(y) + R_1^{(1)}(y) + R_{-2k}^{(r_2+1)}(y) + R_{-2k-1}^{(1)}(y)  + {\displaystyle \sum_{j=1}^k R_{-(2j-1)}^{(r_1+r_2)}(y) }\\  \hspace{8.2cm} + {\displaystyle \sum_{j=1}^{k-1} R_{-2j}^{(r_2)}(y)}, & \text{for} \, \, \, k > 0, \\
R_0^{(1)}(y) + R_1^{(1)}(y) + R_{-2k}^{(1)}(y), & \text{for} \, \, \, k < 0, \\
\end{cases}
\end{align}
where $R_\gamma^{(l)}(y)$ denotes the residue of $ \Lambda_{\mathbb{F}, k}(s) y^{-s} $ of order $l$ at $s= \gamma$.  Letting $T \rightarrow \infty$ and making use of Stirling's formula for $\Gamma(s)$, we can show that both the horizontal integrals vanish.  Now,  after taking the left vertical integral to the right side and using \eqref{sum_to_integral} in \eqref{Int_along_contour},  we are left with
\begin{align}\label{Cauchy}
\sum_{n=1}^{\infty}\sigma_{\mathbb{F}, -2k -1}(n)V\left(\frac{(2\pi)^d y}{D}\bigg| \bar{0}_d \right) &= \mathcal{R} + \frac{1}{2\pi i}\int_{(\alpha)} \Lambda_{\mathbb{F}, k}(s) y^{-s} {\rm d}s. 
\end{align}
Now our main aim is to simplify the above vertical integral,  which we defined as
\begin{align}
I_{\mathbb{F}, k}(y) := \frac{1}{2\pi i}\int_{(\alpha)}\Lambda_{\mathbb{F}, k}(s)y^{-s}{\rm d}s.  
\end{align}
Replace $s$ by $-t-2k$,   so that $\Re(t) = -\Re(s)-2k = -\alpha - 2k > \max\{1, -2k\}.$ Let $\beta = \Re(t) = -\alpha - 2k.$ After this change of variable,  the above integral becomes
\begin{align*}
I_{\mathbb{F}, k}(y) &= \frac{1}{2\pi i}\int_{(\beta)}\Lambda_{\mathbb{F},k}(-t-2k)y^{t+2k}\rm {d}t.
\end{align*}
Now we bring in use of Lemma \ref{Functional Equation for Product of Zeta Functions} to write as 
\begin{align}
I_{\mathbb{F}, k}(y) &= (-1)^{kr_1+r_2} y^{2k} \frac{1}{2\pi i}\int_{(\beta)}\Lambda_{\mathbb{F},k}(t)y^{t}{\rm d}t \nonumber \\
&= (-1)^{(kr_1+r_2)}y^{2k}\sum_{n=1}^{\infty}\sigma_{\mathbb{F}, -2k -1}(n)V\left(\frac{(2\pi)^d n}{Dy} \bigg| \bar{0}_d \right), \label{left vertical}
\end{align}
where we have used \eqref{sum_to_integral} in the last step.  Now substituting \eqref{left vertical} in \eqref{Cauchy},  we obtain
\begin{align}\label{Main thm without residue}
\sum_{n=1}^{\infty}\sigma_{\mathbb{F}, -2k -1}(n)V\left(\frac{(2\pi)^d ny}{D} \bigg| \bar{0}_d \right) &=  (-1)^{k r_1+r_2}y^{2k}\sum_{n=1}^{\infty}\sigma_{\mathbb{F}, -2k -1}(n)V\left(\frac{(2\pi)^d n}{Dy} \bigg| \bar{0}_d \right) + \mathcal{R}.
\end{align}
Now we are left with calculating the term $\mathcal{R}$ containing all the residual terms.  

{\bf Case 1}: Let $k > 0$.  In this case we need to calculate the following terms:
\begin{align*}
\mathcal{R} = R_0^{(r_2+1)}(y) + R_1^{(1)}(y) + R_{-2k}^{(r_2+1)}(y) + R_{-2k-1}^{(1)}(y) + \sum_{j=1}^kR_{-(2j-1)}^{(r_1+r_2)}(y) + \sum_{j=1}^{k-1} R_{-2j}^{(r_2)}(y).
\end{align*}
Using the definition of residue calculation,  one can check that 
\begin{align}
R_0^{(r_2+1)} (y) &= \frac{1}{(r_2)!}\lim_{s \rightarrow 0} \frac{{\rm d}^{r_2}}{{\rm d}s^{r_2}}\left(s^{r_2+1}    \Lambda_{\mathbb{F},k}(s) y^{-s}\right),  \label{residue at 0}
 \\
R_1^{(1)} (y) &= \lim_{s \rightarrow 1} (s-1)  \Lambda_{\mathbb{F},k}(s) y^{-s} \nonumber \\
& = \lim_{s \rightarrow 1} (s-1) \Gamma(s)^d\zeta_{\mathbb{F}}(s)\zeta_{\mathbb{F}}(s+2k+1)\left(\frac{(2\pi)^d y}{D}\right)^{-s}  \nonumber \\
& = H_{\mathbb{F} }  \zeta_{\mathbb{F}}(2k+2)  \frac{ D }{ (2\pi)^d y},   \label{residue at 1} 
\end{align}
in the last step we have used the class number formula \eqref{Class number formula}.  Further,  we have
\begin{align}\label{residue at -2k}
R_{-2k}^{(r_2+1)}(y) &= \frac{1}{(r_2)!}\lim_{s \rightarrow -2k} \frac{{\rm d}^{r_2}}{{\rm d}s^{r_2}}\left((s+2k)^{r_2+1}\Lambda_{\mathbb{F}, k}(s)y^{-s}\right). 
\end{align}
Now using Lemma \ref{Functional Equation for Product of Zeta Functions}, the above residual term can be rewritten as follows
{\allowdisplaybreaks \begin{align}
R_{-2k}^{(r_2+1)}(y) &= \frac{(-1)^{kr_1+r_2}}{(r_2)!}\lim_{s \rightarrow -2k} \frac{{\rm d}^{r_2}}{{\rm d}s^{r_2}}\left((s+2k)^{r_2+1}\Lambda_{\mathbb{F}, k}(-s-2k)y^{-s}\right), \nonumber \\
&= \frac{(-1)^{kr_1+r_2+1}y^{2k}}{(r_2)!}\lim_{s \rightarrow 0} \frac{{\rm d}^{r_2}}{{\rm d}s^{r_2}}\left(s^{r_2+1}\Lambda_{\mathbb{F}, k}(s)y^{s}\right),  \nonumber \\
&= (-1)^{kr_1+r_2+1}y^{2k}R_0^{(r_2+1)}\left(\frac{1}{y}\right). \label{connection at 0 and -2k}
\end{align}}
This shows that the residue at $s=0$ and $s=-2k$ are linked by the above relation.  Next,    the residue at $s=-2k-1$ is given by
\begin{align*}
R_{-2k-1}^{(1)}(y) &= \lim_{s\rightarrow -2k-1}(s+2k+1)\Lambda_{\mathbb{F}, k}(s)y^{-s}.
\end{align*}
Further,  use Lemma \ref{Functional Equation for Product of Zeta Functions} to see that
\begin{align}
R_{-2k-1}^{(1)}(y) &= (-1)^{kr_1+r_2}\lim_{s\rightarrow -2k-1}(s+2k+1)\Lambda_{\mathbb{F}, k}(-s-2k)y^{-s} \nonumber \\
&= (-1)^{kr_1+r_2+1}y^{2k}\lim_{s\rightarrow 1}(s-1)\Lambda_{\mathbb{F}, k}(s)y^{s} \nonumber\\
&= (-1)^{kr_1+r_2+1}y^{2k}R^{(1)}_1\left(\frac{1}{y}\right)  \label{connection at 1 and -(2k+1)} \\
&=  (-1)^{kr_1+r_2+1}y^{2k}  H_{\mathbb{F} }  \zeta_{\mathbb{F}}(2k+2)  \frac{ D y }{ (2\pi)^d }.  \label{residue at -(2k+1)}
\end{align}
The above relation indicates the connection between the residues at $s=1$ and $s=-2k-1$.  
The remaining residues at $s=-(2j-1)$ and $s=-2j$ can be evaluated as follows:
\begin{align}
R_{-(2j-1)}^{(r_1+r_2)} (y) &= \frac{1}{(r_1+r_2-1)!}\lim_{s \rightarrow -(2j-1)} \frac{{\rm d}^{r_1+r_2-1}}{{\rm d}s^{r_1+r_2-1}}\left((s+2j-1)^{r_1+r_2}\Lambda_{\mathbb{F}, k}(s)y^{-s}\right),  \nonumber \\
&= \frac{1}{(r)!}\lim_{s \rightarrow -(2j-1)} \frac{{\rm d}^{r}}{{\rm d}s^{r}}\left((s+2j-1)^{r+1}\Lambda_{\mathbb{F}, k}(s)y^{-s}\right),  \label{residue at -(2j-1)}\\
R_{-(2j)}^{(r_2)}(y) &= \frac{1}{(r_2-1)!}\lim_{s \rightarrow -2j} \frac{{\rm d}^{r_2-1}}{{\rm d}s^{r_2-1}}\left((s+2j)^{r_2} \Lambda_{\mathbb{F}, k}(s)y^{-s}\right). \label{residue at -2j}
\end{align}
As we know the poles of  $ \Lambda_{\mathbb{F}, k}(s)y^{-s}$ at $s=-(2j-1)$ and $s=-2j$ are of higher order,  so it is a difficult task to simplify the above residual terms.   However,  one can say that these residual terms will be polynomials in $\mathbb{C}[y,  \log(y)]$ with degree being $2j+r_1+r_2-2$ and $2j+r_2-1$,  respectively.  This is because for the residual term \eqref{residue at -(2j-1)} the highest degree of $y$ will be $2j-1$ and the highest power of $\log(y)$ will be $r=r_1+r_2-1$.  Similarly,  for the residual term \eqref{residue at -2j} the highest degree of $y$ will be $2 j$ and the highest degree of $\log(y)$ will be $r_2-1$.  
In a similar fashion,    one can verify that the residue at $s=0$,  i.e.,  the residual term \eqref{residue at 0} is a polynomial in $\mathbb{C}[\log(y)]$ of degree $r_2$.  
This completes the calculations of all residues for $k>0$.  Now we shall compute residues for $k<0$.  \\
{\bf Case 2}: When $k < 0$,  from \eqref{residues for different k},  we know that the residual term $\mathcal{R}$ is given by 
\begin{align*}
\mathcal{R}= R_0^{(1)}(y) + R_1^{(1)}(y) + R_{-2k}^{(1)}(y). 
\end{align*}
Recall that,  in this case,  poles of $\Lambda_{\mathbb{F}, k}(s)$ are at $0, 1, -2k$ and all are simple.  Therefore,   we have
 \begin{align}\label{R_0^1}
R_0^{(1)}(y) &= \lim_{s \rightarrow 0}~s \Lambda_{\mathbb{F}, k}(s)y^{-s} 
= \lim_{s \rightarrow 0} s\Gamma(s)^{d} \zeta_{\mathbb{F}}(s)\zeta_{\mathbb{F}}(s+2k+1) \left(\frac{(2\pi)^d y}{D}\right)^{-s}  \nonumber \\
& = \lim_{s \rightarrow 0}( s\Gamma(s))^{d}\frac{\zeta_{\mathbb{F}}(s)}{s^{r_1 + r_2 - 1}} \frac{\zeta_{\mathbb{F}}(s+2k+1)}{s^{r_2}} \left(\frac{(2\pi)^d y}{D}\right)^{-s} \nonumber \\
&= \lim_{s \rightarrow 0} \Gamma(s+1)^{d}\frac{\zeta_{\mathbb{F}}(s)}{s^{r_1 + r_2 - 1}} \frac{\zeta_{\mathbb{F}}(s+2k+1)}{s^{r_2}}  \left(\frac{(2\pi)^d y}{D}\right)^{-s} \nonumber \\
& =  \frac{C_\mathbb{F} \zeta_{\mathbb{F}}^{(r_2)}(2k+1)}{(r_2)!}.  
\end{align}
To obtain the final step we used \eqref{Laurent series_at s=0_1st coeff} 
and $s=0$ is a zero of order $r_2$ for $\zeta_{\mathbb{F}}(s+2k+1)$ as $k<0$ and negative odd integers are zeros of $\zeta_{\mathbb{F}}(s)$ of order $r_2$.  
Now we calculate the residual term,  at $s=-2k$,  which is given by
\begin{align*}
R_{-2k}^{(1)}(y) &= \lim_{s \rightarrow -2k} (s+2k)\Lambda_{\mathbb{F}, k}(s)y^{-s}.  
\end{align*}
To obtain a simplified form,  we use Lemma \ref{Functional Equation for Product of Zeta Functions}.  Thus,  we get
\begin{align}\label{R_(-2k)^1}
R_{-2k}^{(1)}(y) &= (-1)^{kr_1+r_2}\lim_{s \rightarrow -2k}(s+2k)\Lambda_{\mathbb{F}, k}(-s-2k)y^{-s}  \nonumber \\
&= (-1)^{kr_1+r_2+1}y^{2k}\lim_{s \rightarrow 0} ~s \Lambda_{\mathbb{F}, k}(s)y^{s}  \nonumber \\
&=(-1)^{kr_1+r_2+1}y^{2k} R_0^{(1)}\left(\frac{1}{y}\right) \nonumber \\
&= (-1)^{kr_1+r_2+1}y^{2k}   \frac{C_\mathbb{F} \zeta_{\mathbb{F}}^{(r_2)}(2k+1)}{(r_2)!}, 
\end{align}
here in the penultimate step and final step,  we have used the definition of $R_{0}^{1}(y)$ and its final expression \eqref{R_0^1}.  This indicates that the residues at $s=0$ and $s=-2k$ are associated with each other.  Thus,  from \eqref{R_0^1} and \eqref{R_(-2k)^1},  we have 
\begin{align}\label{sum of two residues}
R_0^{(1)}(y) + R_{-2k}^{(1)}(y)  = \left\{ 1+ (-1)^{kr_1+r_2+1}y^{2k} \right\}    \frac{C_\mathbb{F}  \zeta_{\mathbb{F}}^{(r_2)}(2k+1)}{(r_2)!}.  
\end{align}
Finally,  the residue at $s=1$,  a simple pole of $ \Lambda_{\mathbb{F}, k}(s)y^{-s}$,  can be evaluated as follows: 
{\allowdisplaybreaks \begin{align}\label{R_1^1}
R_1^{(1)} (y) = \lim_{s \rightarrow 1} (s-1)  \Lambda_{\mathbb{F},k}(s) y^{-s} &  = \lim_{s \rightarrow 1} (s-1) \Gamma(s)^d\zeta_{\mathbb{F}}(s)\zeta_{\mathbb{F}}(s+2k+1)\left(\frac{(2\pi)^d y}{D}\right)^{-s}  \nonumber \\
& = H_{\mathbb{F} }  \zeta_{\mathbb{F}}(2k+2)  \frac{ D }{ (2\pi)^d y},  \nonumber \\
&= \begin{cases}
 -\frac{1}{4\pi y},  & \quad \text{if} \, \, \, (k,r_1,  r_2) = (-1,1,  0), \\
  \frac{ H_{\mathbb{F} }  \zeta_{\mathbb{F}}(0) D }{ (2\pi)^2 y} ,  & \quad \text{if} \, \, \, (k,r_1,  r_2) = (-1,0,  1), \\
0,  & \quad \text{otherwise.}
\end{cases}
\end{align}}
Till now,  we assumed that $y>0$.  However,  by analytic continuation,  one can show that the identity can be extended for $\Re(y) > 0$.  Thus, we substitute $y = -iz$ with $z \in \mathbb{H}$ in \eqref{Main thm without residue} to get the following form,  for $z \in \mathbb{H}$,  
\begin{align}
\sum_{n=1}^{\infty}\sigma_{\mathbb{F}, -2k -1}(n)V\left(\frac{-(2\pi)^d n i z}{D} \bigg| \bar{0}_d \right) &=  (-1)^{k( r_1+1)+ r_2} z^{2k}\sum_{n=1}^{\infty}\sigma_{\mathbb{F}, -2k -1}(n)V\left(\frac{(2\pi)^d n i}{Dz} \bigg| \bar{0}_d \right) \nonumber \\
& + \mathcal{R},  \nonumber 
\end{align}
where,  for $k>0$,  the term $ \mathcal{R}$ is given by
\begin{align*}
\mathcal{R} & = \Big\{ R_0^{(r_2+1)}(-i z)  +  (-1)^{k(r_1+1)+r_2+1} z^{2k}R_0^{(r_2+1)}\left(\frac{i}{  z}\right) \Big\} \\
& + \Big\{  R_1^{(1)} (-i z) + (-1)^{k(r_1+1)+r_2+1} z^{2k}  R_1^{(1)} \left( \frac{i}{ z}\right)  \Big\} \nonumber  \\
& + {\displaystyle \sum_{j=1}^k R_{-(2j-1)}^{(r_1+r_2)}(-i z) + \sum_{j=1}^{k-1} R_{-2j}^{(r_2)}}(-i z).
\end{align*}
Here we note that to get the above form we have used the relations \eqref{connection at 0 and -2k} and \eqref{connection at 1 and -(2k+1)} among residues.   To simplicity further,   we define
\begin{align*}
& \mathfrak{R}_{0}(z):= R_0^{(r_2+1)} (- i z), 
\mathfrak{R}_{1}(z):= R_{1}^{(1)}(- i z),  \\
& \mathfrak{R}_{-(2j-1)}(z):= R_{-(2j-1)}^{(r_1+r_2)}(-i z),  \mathfrak{R}_{-(2j)}(z) := R_{-2j}^{(r_2)}(-i z).
\end{align*}
Then the above the term $\mathcal{R}$ becomes,  for $k >0$,
\begin{align*}
\mathcal{R}  = &  \Big\{ \mathfrak{R}_{0}(z) +  (-1)^{k(r_1+1)+r_2+1} z^{2k} \mathfrak{R}_{0}\left(-\frac{1}{z}\right)  \Big\} \\
 + & \Big\{ \mathfrak{R}_{1}(z) +  (-1)^{k(r_1+1)+r_2+1} z^{2k} \mathfrak{R}_{1}\left(-\frac{1}{z}\right)  \Big\} \\ 
 &  {\displaystyle \sum_{j=1}^k \mathfrak{R}_{-(2j-1)}(z) + \sum_{j=1}^{k-1} \mathfrak{R}_{-(2j)}(z). }
\end{align*}
In a similar way,  after substituting $y=-i z$,   for $k <0$,  from \eqref{R_0^1}-\eqref{sum of two residues},  the residual term $\mathcal{R}$ can be written as 
\begin{align*}
\mathcal{R}= & \Big\{ 1 +  (-1)^{k(r_1+1)+r_2+1} z^{2k}  \Big\}   \frac{C_\mathbb{F}\zeta_{\mathbb{F}}^{(r_2)}(2k+1)}{(r_2)!} \nonumber  \\
& + \mathfrak{R}(z), 
\end{align*}
where the term $ \mathfrak{R}(z)$ is given by 
\begin{align*}
 \mathfrak{R}(z)=  \begin{cases}
 -\frac{i}{4\pi z},  & \quad \text{if} \, \, \, (k,r_1,  r_2) = (-1,1,  0), \\
  \frac{ H_{\mathbb{F} }  \zeta_{\mathbb{F}}(0) D i }{ (2\pi)^2 z} ,  & \quad \text{if} \, \, \, (k,r_1,  r_2) = (-1,0,  1), \\
0,  & \quad \text{otherwise.}
\end{cases}
\end{align*}
Finally,  combining all these residual terms and using the definition \eqref{Infinite series F} of $\mathfrak{F}_{\mathbb{F}, k}(z)$,  we obtain the final identity.
\end{proof}

\begin{proof}[Corollary {\rm \ref{When F equals Q}}][]
If we consider $\mathbb{F}=\mathbb{Q}$,  then one can easily check that 
$$(r_1,  r_2,  d,  D,  H_{\mathbb{F}},  C_\mathbb{F})= (1,0,1,  1,  1,  -1/2).$$ Moreover,  the number field analogous generalized divisor function  $\sigma_{\mathbb{F}, -k}(n)$ reduces to the usual generalized divisor function $ \sigma_{-k}(n)$.  Therefore,  for $k >0$,  we have 
\begin{align*}
\mathfrak{F}_{\mathbb{F}, 2k+1}(z) & = \sum_{n=1}^\infty \sigma_{-(2k+1)}(n) e^{2 \pi i n z}, \\
 \mathfrak{S}_{\mathbb{F}, 2k+1}(z) & =  \sum_{n=1}^\infty \sigma_{-(2k+1)}(n) e^{2 \pi i n z}- \mathfrak{R}_0(z)- \mathfrak{R}_1(z),  
\end{align*}
where $\mathfrak{R}_0(z)= - \frac{\zeta(2k+1)}{2}$ and 
$\mathfrak{R}_1(z)= \frac{i \zeta(2k+2)}{2 \pi z} = \frac{(2 \pi i)^{2k+1}}{2z}  \frac{B_{2k+2}}{ (2k+2)!} $.  Again,  one can check that 
\begin{align*}
\mathfrak{R}_{-(2j-1)}(z) & = \lim_{s \rightarrow -(2j-1)} (s+2j-1)  \Gamma(s) \zeta(s) \zeta(s+2k+1)  (- 2\pi iz)^{-s}    \\
& = \frac{(2 \pi i)^{2k+1}}{2 z } \frac{B_{2j}}{(2j)!} \frac{B_{2k-2j+2}}{(2k-2j+2)! } z^{2j}.
\end{align*}
Here we have used the fact that $\zeta(1-2j)= - \frac{B_{2j}}{(2j)!}$ and Euler's formula \eqref{Euler's formula for even zeta values}.  Substituting these values in Theorem \ref{DB},  we get
\begin{align*}
& \sum_{n=1}^\infty \sigma_{-(2k+1)}(n) e^{2 \pi i n z} + \frac{\zeta(2k+1)}{2} -\frac{(2 \pi i)^{2k+1}}{2z}  \frac{B_{2k+2}}{ (2k+2)!} \\
& = z^{2k} \left\{  \sum_{n=1}^\infty \sigma_{-(2k+1)}(n) e^{-2 \pi i n/z} + \frac{\zeta(2k+1)}{2} +  \frac{(2 \pi i)^{2k+1}}{2z}  \frac{ z^2  B_{2k+2}}{   (2k+2)!}   \right\}  \\
& +  \frac{(2 \pi i)^{2k+1}}{2 z } \sum_{j=1}^{k} \frac{B_{2j}}{(2j)!} \frac{B_{2k-2j+2}}{(2k-2j+2)! } z^{2j}.
\end{align*}
This implies that
\begin{align*}
 \sum_{n=1}^\infty \sigma_{-(2k+1)}(n) e^{2 \pi i n z} + \frac{\zeta(2k+1)}{2}  & =  z^{2k} \left\{  \sum_{n=1}^\infty \sigma_{-(2k+1)}(n) e^{-2 \pi i n/z} + \frac{\zeta(2k+1)}{2} \right\}  \\
& +  \frac{(2 \pi i)^{2k+1}}{2 z } \sum_{j=0}^{k+1} \frac{B_{2j}}{(2j)!} \frac{B_{2k-2j+2}}{(2k-2j+2)! } z^{2j},
\end{align*}
which is exactly same as the identity \eqref{Grosswald identity}.  This completes the proof.  
\end{proof}

\begin{proof}[Theorem {\rm \ref{k>0 and real number field}}][]
Given that $\mathbb{F}$ is a totally real number field of degree $r_1$,  so we have $r_2=0$ and $d=r_1$.  Thus,  in Theorem \ref{DB},   we have 
\begin{align*}
\mathfrak{R}_{0}(z) &= \lim_{s \rightarrow 0} s\Gamma^{r_1}(s) \zeta_{\mathbb{F}}(s)\zeta_{\mathbb{F}}(s+2k+1)\left(-\frac{(2\pi)^{r_1}iz}{D}\right)^{-s}  \\
& =  \lim_{s \rightarrow 0} \Gamma^{r_1}(s+1) \frac{ \zeta_{\mathbb{F}}(s)}{ s^{r_1-1}}\zeta_{\mathbb{F}}(s+2k+1)\left(-\frac{(2\pi)^{r_1}iz}{D}\right)^{-s}  \\
& = C_{\mathbb{F}} \zeta_{\mathbb{F}}(2k+1),  \\
\mathfrak{R}_{1}(z) &= H_{\mathbb{F} }  \zeta_{\mathbb{F}}(2k+2)  \frac{ i D  }{ (2\pi)^{r_1} z},  \\
\mathfrak{R}_{-(2j-1)}(z) &= \frac{1}{(r_1-1)!}\lim_{s \rightarrow (1-2j)} \frac{{\rm d}^{r_1-1}}{{\rm d}s^{r_1-1}}\left((s+2j-1)^{r_1}\Lambda_{\mathbb{F}, k}(s)(-iz)^{-s}\right).
\end{align*}
Note that we have used \eqref{Laurent series_at s=0_1st coeff} to evaluate $\mathfrak{R}_{0}(z)$.  Moreover,  we point out that the terms $\mathfrak{R}_{-2j}(z)$ would not appear in this case since we are dealing with totally real fields.  Substituting the above values in Theorem \ref{DB},  we finish the proof of Theorem {\rm \ref{k>0 and real number field}}.   
\end{proof}

\begin{proof}[Corollary {\rm \ref{k>0 and quadratic real field}}][]
Given that $m$ is a square-free positive integer and $\mathbb{F}=\mathbb{Q}(\sqrt{m})$.   In this case,  we have $r_1=2,  r_2=0,  d=2$ and $D=4m$.  In this case,  we use \eqref{Bessel fn formula} to see that
\begin{align}\label{particular V}
V\left(-\frac{(2\pi)^2 niz}{4m} \bigg| \bar{0}_2 \right) = 2 K_0\left( 2 \pi  \sqrt{\frac{nz}{m}}e^{-\frac{i \pi}{4}}\right).
\end{align}
Moreover,  the residual terms become
\begin{align*}
\mathfrak{R}_{0}(z) 
& = \zeta_{\mathbb{F}}^{'}(0) \zeta_{\mathbb{F}}(2k+1),  \\
\mathfrak{R}_{1}(z) &= H_{\mathbb{F} }  \zeta_{\mathbb{F}}(2k+2)  \frac{ i m  }{  \pi^2 z},  \\
\mathfrak{R}_{-(2j-1)}(z) &=  \lim_{s \rightarrow (1-2j)} \frac{{\rm d}}{{\rm d}s}\left((s+2j-1)^2\Lambda_{\mathbb{F}, k}(s)(-iz)^{-s}\right).
\end{align*}
To calculate $\mathfrak{R}_{0}(z) $ we have used the fact $C_\mathbb{F}= \zeta_{\mathbb{F}}^{'}(0)$ as we are working on real quadratic field.  
Putting these terms in Theorem {\rm \ref{k>0 and real number field}},  one can complete the proof of this result.  
\end{proof}

\begin{proof}[Theorem {\rm \ref{k<0 and totally real fields}}][]
This is result is an implication of our main Theorem \ref{DB} for negative integers $k$ and totally real fields.  Mainly,  from \eqref{for k<0} with $r_2=0$,  we have
\begin{align*}
 \mathfrak{F}_{\mathbb{F}, 2k+1}(z) - C_\mathbb{F} \zeta_{\mathbb{F}}(2k+1) 
 & = (-1)^{k(r_1+1)}z^{2k} \left\{  \mathfrak{F}_{\mathbb{F}, 2k+1}\left(-\frac{1}{z}\right) - C_\mathbb{F}  \zeta_{\mathbb{F}}(2k+1) \right\} \nonumber \\
& + \begin{cases}
 -\frac{i}{4\pi z},  & \quad \text{if} \, \, \, (k,r_1,  r_2) = (-1,1,  0), \\
0,  & \quad \text{otherwise.}
\end{cases}
\end{align*}
Now replacing $k$ by $-k$ and simplifying,  we complete the proof.     
\end{proof}

\begin{proof}[Corollary {\rm \ref{Evaluation at z=i}}][]
Given that $k \geq 3$ and $r_1$ both are positive odd integers.  Substituting $z=i$ in Theorem {\rm \ref{k<0 and totally real fields}},  we obtain 
\begin{align*}
\mathfrak{F}_{\mathbb{F}, -2k+1}(i)  =C_\mathbb{F}  \zeta_{\mathbb{F}}(1-2k). 
\end{align*}
Now the proof of this corollary follows by using the value  \eqref{Laurent series_at s=0_1st coeff} of $C_\mathbb{F}$ and  the definition \eqref{Infinite series F} of $ \mathfrak{F}_{\mathbb{F}, -2k+1}(i) $. 
\end{proof}

\begin{proof}[Corollary {\rm \ref{Transformation formula for quadratic fields}}][]
Substituting $\mathbb{F}=\mathbb{Q}(\sqrt{m})$,  i.e.,  $r_1=2,  r_2=0,  D=4m$ in  Theorem {\rm \ref{k<0 and totally real fields}},  we can see that 
\begin{align}\label{For k<0}
(i z)^{2k}\left\{  \mathfrak{F}_{\mathbb{F}, -2k+1}(z) - \zeta_{\mathbb{F}}^{'}(0)\zeta_{\mathbb{F}}(1-2k) \right\} 
& =  \left\{  \mathfrak{F}_{\mathbb{F}, -2k+1}\left(-\frac{1}{z}\right)
- \zeta_{\mathbb{F}}^{'}(0) \zeta_{\mathbb{F}}(1-2k) \right\}, 
\end{align}
where 
$$
\mathfrak{F}_{\mathbb{F}, -2k+1}(z) =  \sum_{n=1}^{\infty}  \sigma_{\mathbb{F}, 2k-1}(n)V\left(-\frac{(2\pi)^2 niz}{4m} \bigg| \bar{0}_2 \right).
$$
Now employing the expression \eqref{particular V} for $V\left(-\frac{(2\pi)^2 niz}{4m} \bigg| \bar{0}_2 \right)$ in the above series and then putting it in \eqref{For k<0},  the proof of Corollary {\rm \ref{Transformation formula for quadratic fields}} follows.  
\end{proof}

\begin{proof}[Theorem {\rm \ref{k>0 and imaginary field}}][]
In this case,  we have taken $\mathbb{F}$ to be a purely imaginary number field with degree $d=2r_2$.  Thus,  the proof of this theorem immediately follows by substituting $r_1=0$ in Theorem \ref{DB}.  
\end{proof}

\begin{proof}[Corollary {\rm \ref{k>0 and quadratic imaginary field}}][]
Given that $\mathbb{F}=\mathbb{Q}(\sqrt{-m})$ is a quadratic imaginary field and $k>0$.    Therefore,  substituting $(r_1,  r_2,  d, D)= (0,1, 2,  4m)$ in Theorem  \ref{k>0 and imaginary field},  it yields that 
\begin{align*}
\mathfrak{S}_{\mathbb{F},  2k+1}(z) & = (-1)^{k+1}z^{2k} \mathfrak{S}_{\mathbb{F},  2k+1} \left(- \frac{1}{z}\right)   
 +  {\displaystyle \sum_{j=1}^k  \mathfrak{R}_{-(2j-1)}(z) + \sum_{j=1}^{k-1} \mathfrak{R}_{-2j}(z)} ,  
\end{align*}
where
\begin{align*}
\mathfrak{S}_{\mathbb{F},  2k+1}(z) &= \sum_{n = 1}^{\infty}\sigma_{\mathbb{F}, -2k-1}(n) V\left(-\frac{ \pi^2 niz}{m} \bigg| \bar{0}_2 \right) - \mathfrak{R}_{0}(z) - \mathfrak{R}_1(z),  
\end{align*}
and the residual terms are defined as 
{\allowdisplaybreaks \begin{align*}
\mathfrak{R}_{0}(z) &  =  \lim_{s \rightarrow 0} \frac{{\rm d}^{2}}{{\rm d}s^{2}}\left(s^{2}    \Lambda_{\mathbb{F},k}(s) (- i z) ^{-s}\right),     \nonumber \\
 \mathfrak{R}_1(z)  & = H_{\mathbb{F} }  \zeta_{\mathbb{F}}(2k+2)  \frac{ i m  }{ \pi^2 z},  \\
   \mathfrak{R}_{-(2j-1)}(z) & =   \lim_{s \rightarrow -(2j-1)}  \left((s+2j-1)\Lambda_{\mathbb{F}, k}(s) (- iz)^{-s}\right),  \nonumber \\ 
   & =  \lim_{s \rightarrow -(2j-1)}  \left((s+2j-1)^2 \Gamma(s)^2 \frac{\zeta_{\mathbb{F}}(s)}{s+2j-1} \zeta_{\mathbb{F}}(s+2k+1) \left( \frac{-i \pi^2 z}{m} \right)^{-s} \right) \nonumber \\
 &=   -\frac{\zeta_{\mathbb{F}}^{'}(1-2j)}{((2j-1)!)^2}\zeta_{\mathbb{F}}(2k-2j+2)\left(\frac{\pi^2 iz}{m}\right)^{2j-1}.  
\end{align*}}
In a similar way,  one can show that 
\begin{align*}
\mathfrak{R}_{-2j}(z) & =  \frac{\zeta_{\mathbb{F}}^{'}(-2j)}{((2j)!)^2}\zeta_{\mathbb{F}}(2k-2j+1)\left(\frac{\pi^2 iz}{m}\right)^{2j}.
\end{align*}
Now combining all these residual terms and using the expression \eqref{particular V} for Steen function,  we complete the proof of \eqref{k>0 and quadratic imaginary field main eqn}.    
\end{proof}

\begin{proof}[Corollary {\rm \ref{For k=-1,  r_2=1}}][]
Substitute $k=-1$ and $\mathbb{F}=\mathbb{Q}(\sqrt{-m})$,  that is,  $r_1=0,  r_2=1$ in Theorem {\rm \ref{k>0 and imaginary field}}.  Thus,  from \eqref{for k<0_imaginary field}-\eqref{Residue at 1 for imag field},  we arrive at 
\begin{align}\label{k=-1_quad}
\mathfrak{U}_{\mathbb{F},  -1}(z) & = z^{-2} \mathfrak{U}_{\mathbb{F},  -1} \left(- \frac{1}{z}\right)   + \frac{H_{\mathbb{F}} \zeta_{\mathbb{F}}(0) i  m}{\pi^2 z},
\end{align}
where
\begin{align*}
\mathfrak{U}_{\mathbb{F},  -1}(z) =  \mathfrak{F}_{\mathbb{F}, -1}(z)  - \zeta_{\mathbb{F}}(0)  \zeta_{\mathbb{F}}^{'}(-1).  
\end{align*}
Here we used  \eqref{Laurent_0} with the fact that $C_\mathbb{F}=\zeta_\mathbb{F}(0)$ as $\mathbb{F}$ is quadratic imaginary field.  
Now multiplying by $z^2$ on both sides of \eqref{k=-1_quad},  we finish the proof of \eqref{U at k=-1}.   
Further,  to obtain \eqref{Exact evaluation at z=i},  substituting $z=i$ in \eqref{k=-1_quad} it yields that
\begin{align*}
 \mathfrak{F}_{\mathbb{F}, -1}(i)  =  \zeta_{\mathbb{F}}(0)  \zeta_{\mathbb{F}}^{'}(-1) +  \frac{H_{\mathbb{F}} \zeta_{\mathbb{F}}(0)   m}{ 2 \pi^2 }.
\end{align*}
Now using the definition \eqref{Infinite series F} of $ \mathfrak{F}_{\mathbb{F}, -1}(i)$ we derive that 
\begin{align*}
\sum_{n=1}^\infty \sigma_{\mathbb{F},1}(n) V\left( \frac{n \pi^2 }{m} | \bar{0}_2  \right) =\zeta_{\mathbb{F}}(0)  \zeta_{\mathbb{F}}^{'}(-1) + \frac{H_{\mathbb{F}} \zeta_{\mathbb{F}}(0)   m}{ 2 \pi^2 }.
\end{align*}
Finally,  using the fact that $V\left( \frac{n \pi^2 }{m} | \bar{0}_2  \right) = 2 K_0\left( 2\pi \sqrt{\frac{n}{m}}  \right)$,  we complete the proof of \eqref{Exact evaluation at z=i}.
\end{proof}

\begin{proof}[Corollary {\rm \ref{k<0 and imaginary field}}][]
The proof of this corollary immediately follows from  \eqref{for k<0_imaginary field}-\eqref{Residue at 1 for imag field} by replacing $k$ by $-k$.  As it is given that $(k,  r_2) \neq (1,1)$,  so  we have to use the fact that $\mathfrak{R}(z)=0$ and multiply by $z^{2k}$ on both sides of  \eqref{for k<0_imaginary field} after replacing $k$ by $-k$. 
\end{proof}

\begin{proof}[Theorem {\rm \ref{k=0 case}}][]
The proof of this theorem goes along the same line as in Theorem \ref{DB},  however,  we give brief outline.  This identity is due to the case $k=0$.  
In the way as we proceeded in \eqref{1st_Integral} and \eqref{sum_to_integral},  one can show that 
\begin{align}\label{sum_to_integral_k=0}
\sum_{n=1}^{\infty}\sigma_{\mathbb{F}, -1}(n)V\left(\frac{(2\pi)^d ny}{D} \bigg| \bar{0}_d \right) &= \frac{1}{2\pi i}\int_{(c)} \Gamma(s)^d \zeta_{\mathbb{F}}(s)\zeta_{\mathbb{F}}(s +1)\left(\frac{(2\pi)^d y}{D}\right)^{-s} {\rm d}s \nonumber \\
& = \frac{1}{2\pi i}\int_{(c)}   \Lambda_{\mathbb{F}, 0}(s) y^{-s} {\rm d}s,  
\end{align}
where $\Lambda_{\mathbb{F}, 0}(s)=  \Gamma(s)^d \zeta_{\mathbb{F}}(s)\zeta_{\mathbb{F}}(s +1) (2\pi)^{- d s} D^{s} $,  which is exactly same as we defined in  \eqref{Lamda_F_k},  and $ 1<  \Re(s)= c < 1 + \epsilon$ with $0< \epsilon<1$.  
Further proceeding in a similar way as in Theorem \ref{DB},  here also we set up a rectangular contour $\mathcal{C}$ made up of the vertices $c-iT, c+iT, \alpha+iT$ and $\alpha-iT$,  with $-2 <\alpha < -1$ and $T$ is some large positive quantity.
{\begin{center}
\begin{tikzpicture}[very thick,decoration={
  markings,
  mark=at position 0.6 with {\arrow{>}}}
 ] 
 \draw[thick,dashed,postaction={decorate}] (-1.5,-2)--(1.5,-2) node[below right, black] {$c-i T$};
 \draw[thick,dashed,postaction={decorate}] (1.5,-2)--(1.5,2) node[above right, black] {$c+iT$} ;
 \draw[thick,dashed,postaction={decorate}] (1.5,2)--(-1.5,2) node[left, black] {$\alpha+i T$}; 
 \draw[thick,dashed,postaction={decorate}] (-1.5,2)--(-1.5,-2) node[below left, black] {$\alpha-i T$}; 
 \draw[thick, <->] (-3,0) -- (3,0) coordinate (xaxis);
 \draw[thick, <->] (0,-2.5) -- (0,2.5)node[midway, above left, black] {\tiny0} coordinate (yaxis);
 \draw (1,0.1)--(1,-0.1) node[midway, above, black]{\tiny 1};
 \draw (-1,0.1)--(-1,-0.1) node[midway, above, black]{\tiny -1};
 \draw (2,0.1)--(2,-0.1) node[midway, above, black]{\tiny 2};
  \draw (-2,0.1)--(-2,-0.1) node[midway, above, black]{\tiny -2};
 \node[above] at (xaxis) {$\mathrm{Re}(s)$};
 \node[right] at (yaxis) {$\mathrm{Im}(s)$};

\end{tikzpicture}
\end{center}}
We now examine the poles of our integrand function $\Lambda_{\mathbb{F}, 0}(s) $.  
At $s=0$,  $\Gamma(s)^d$ has a pole order $d$,  $\zeta_{\mathbb{F}}(s)$ has a zero of order $r_1+r_2-1$,  and $\zeta_{\mathbb{F}}(s+1)$ has a simple pole.  Therefore,  the order of the pole of the integrand function $ \Lambda_{\mathbb{F}, 0}(s)$ at $s=0$ is $d-(r_1+r_2-1)+1= r_2+2$ since $d=r_1+2r_2$.  It is easy to see that the integrand function has a simple pole at $s=1$.  From Lemma \ref{Functional Equation for Product of Zeta Functions},  we can see that 
$\Lambda_{\mathbb{F}, 0}(s)= (-1)^{r_2}\Lambda_{\mathbb{F}, 0}(-s)$.  This indicates that $s=-1$ is also a simple pole of the integrand function.  
%
%
Now utilizing Cauchy residue theorem,  we arrive 
\begin{align}\label{after CRT}
\sum_{n=1}^{\infty}\sigma_{\mathbb{F}, -1}(n)V\left(\frac{(2\pi)^d ny}{D} \bigg| \bar{0}_d \right) =  (-1)^{r_2} \sum_{n=1}^{\infty}\sigma_{\mathbb{F}, -1}(n)V\left(\frac{(2\pi)^d n}{D y} \bigg| \bar{0}_d \right) + \mathcal{R},
\end{align}
where the residual term $\mathcal{R}$ is given by 
\begin{align}\label{Residues for k=0}
\mathcal{R}=R_{0}^{(r_2 +2)}(y) + R_{1}^{(1)}(y) +  R_{-1}^{(1)}(y).
\end{align}
The above term $ \mathcal{R}$  makes the main difference with Theorem \ref{DB},  see \eqref{residues for different k}.  The terms of $ \mathcal{R}$ can be calculated as follows:
 \begin{align}
R_{0}^{(r_2 +2)}(y) &= \frac{1}{(r_2+1)!} \lim_{s \rightarrow 0} \frac{{\rm d}^{r_2+1}}{{\rm d}s^{r_2+1}} \left(s^{r_2+2} \Lambda_{\mathbb{F}, 0}(s) y^{-s}  \right),  \\
R_{1}^{(1)}(y) &=  \lim_{s \rightarrow 1}  (s-1) \Lambda_{\mathbb{F}, 0}(s) y^{-s}   \nonumber \\
& =  \lim_{s \rightarrow 1}  (s-1) \Gamma(s)^d \zeta_{\mathbb{F}}(s)\zeta_{\mathbb{F}}(s +1) (2\pi)^{-d s} D^{s} y^{-s}  \nonumber  \\
& = \frac{H_{\mathbb{F}} \zeta_{\mathbb{F}}(2) D}{(2\pi)^d y}, \\
R_{-1}^{(1)}(y) &=   \lim_{s \rightarrow -1}   (s+1) \Lambda_{\mathbb{F}, 0}(s) y^{-s}   \nonumber \\
&=   \lim_{s \rightarrow -1}  (s+1) (-1)^{r_2} \Lambda_{\mathbb{F}, 0}(-s) y^{-s}  \nonumber \\
 &=  (-1)^{r_2+1}  \lim_{s \rightarrow 1}  (s-1)  \Lambda_{\mathbb{F}, 0}(s) y^{s}  \nonumber \\
 & = (-1)^{r_2+1}  \frac{H_{\mathbb{F}} \zeta_{\mathbb{F}}(2) D y}{(2\pi)^d }.  
\end{align}
Here  we used class number formula \eqref{Class number formula} and Lemma \ref{Functional Equation for Product of Zeta Functions} to simplify the above residual terms $R_{1}^{(1)}(y)$ and $R_{-1}^{(1)}(y)$.  Substituting the above residual terms in \eqref{Residues for k=0} 
and together with \eqref{after CRT},  we get
 \begin{align*}
& \sum_{n=1}^{\infty}\sigma_{\mathbb{F}, -1}(n)V\left(\frac{(2\pi)^d ny}{D} \bigg| \bar{0}_d \right) - \frac{H_{\mathbb{F}} \zeta_{\mathbb{F}}(2) D}{(2\pi)^d y} \nonumber \\
& =  (-1)^{r_2} \left\{ \sum_{n=1}^{\infty}\sigma_{\mathbb{F}, -1}(n)V\left(\frac{(2\pi)^d n}{D y} \bigg| \bar{0}_d \right) -   \frac{H_{\mathbb{F}} \zeta_{\mathbb{F}}(2) D y}{(2\pi)^d }  \right\} \nonumber \\
& +  \frac{1}{(r_2+1)!} \lim_{s \rightarrow 0} \frac{{\rm d}^{r_2+1}}{{\rm d}s^{r_2+1}}  \left(  s^{r_2+2} \Lambda_{\mathbb{F}, 0}(s) y^{-s}\right). 
\end{align*}
The above identity holds for $y>0$,  however,
by analytic continuation,  one can show that this identity is also true for $\Re(y) > 0$.  We substitute $y = -iz$ to get the following form,  for $z \in \mathbb{H}$,  
{\allowdisplaybreaks \begin{align*}
& \sum_{n=1}^{\infty}\sigma_{\mathbb{F}, -1}(n)V\left(-\frac{(2\pi)^d n i z}{D} \bigg| \bar{0}_d \right) - \frac{H_{\mathbb{F}} \zeta_{\mathbb{F}}(2) i D}{(2\pi)^d  z} \nonumber \\
& =  (-1)^{r_2} \left\{ \sum_{n=1}^{\infty}\sigma_{\mathbb{F}, -1}(n)V\left(\frac{(2\pi)^d ni}{D z} \bigg| \bar{0}_d \right)  +  \frac{H_{\mathbb{F}} \zeta_{\mathbb{F}}(2) D i z}{(2\pi)^d }  \right\} \nonumber \\
& +  \frac{1}{(r_2+1)!} \lim_{s \rightarrow 0} \frac{{\rm d}^{r_2+1}}{{\rm d}s^{r_2+1}}  \left(  s^{r_2+2} \Lambda_{\mathbb{F}, 0}(s) (-i z)^{-s}\right). 
\end{align*}}
Now if we use the definition \eqref{Infinite series F} of $\mathfrak{F}_{\mathbb{F}, 1}(z)$,  then the above expression can be rewritten as 
\begin{align*}
 \mathfrak{F}_{\mathbb{F}, 1}(z) - \mathfrak{R}_1(z) = (-1)^{r_2} \left\{ \mathfrak{F}_{\mathbb{F}, 1} \left(- \frac{1}{z} \right) - \mathfrak{R}_1\left(-\frac{1}{z}  \right) \right\} +\mathfrak{R}_{0}(z),
\end{align*}
where 
\begin{align*}
\mathfrak{R}_1(z)=  \frac{H_{\mathbb{F}} \zeta_{\mathbb{F}}(2) i D}{(2\pi)^d  z},  \quad {\rm and} \quad \mathfrak{R}_{0}(z)=  \frac{1}{(r_2+1)!} \lim_{s \rightarrow 0} \frac{{\rm d}^{r_2+1}}{{\rm d}s^{r_2+1}}  \left(  s^{r_2+2} \Lambda_{\mathbb{F}, 0}(s) (-i z)^{-s}\right).
\end{align*}
Letting $\mathfrak{T}_{\mathbb{F}}(z):=  \mathfrak{F}_{\mathbb{F}, 1}(z) - \mathfrak{R}_1(z)$,  one can complete the proof of Theorem \ref{k=0 case}.
\end{proof}

\begin{proof}[Corollary {\rm \ref{k=0 totally real field}}][]
Letting $\mathbb{F}$ to be a totally real number field of degree $d=r_1$ in Theorem \ref{k=0 case},  we get 
\begin{align}\label{totally real}
\mathfrak{F}_{\mathbb{F}, 1}(z) - \mathfrak{F}_{\mathbb{F}, 1}\left(-\frac{1}{z}\right)& =  \mathfrak{R}_1(z) - \mathfrak{R}_1 \left(- \frac{1}{z} \right) + \mathfrak{R}_0(z),  
\end{align}
where $$\mathfrak{R}_1(z)=  \frac{H_{\mathbb{F}} \zeta_{\mathbb{F}}(2) i D}{(2\pi)^{r_1}  z}, \quad  \mathfrak{R}_0(z) = \lim_{s \rightarrow 0} \frac{{\rm d}}{{\rm d}s}  \left(  s^{2} \Gamma^{r_1} (s) \zeta_{\mathbb{F}}(s) \zeta_{\mathbb{F}}(s+1) \left(- \frac{(2 \pi)^{r_1} i z}{D}  \right)^{-s}\right).  $$
Now we shall try to simplify the term $\mathfrak{R}_0(z)$.  Utilizing the fact that
\begin{align*}
\Gamma(s)= \frac{1}{s}-\gamma + O(s),
\end{align*}
one can see that
\begin{align}\label{Laurent_gamma}
(s \Gamma(s))^{r_1}= 1 -  r_1 \gamma s + O(s^2).  
\end{align}
Since $\mathbb{F}$ is a totally real number field of degree $r_1$,  so from \eqref{Laurent_0}, the Laurent series expansion of $\zeta_{\mathbb{F}}(s) $ at $s=0$,  one has
\begin{align}\label{Laurent_at s=0}
\frac{\zeta_{\mathbb{F}}(s)}{s^{r_1-1}}= a_0 + a_1 s + O(s^{2}),
\end{align}
where 
$$
a_0 = \frac{\zeta_{\mathbb{F}}^{(r_1-1)}(0)  }{(r_1-1)!} =C_\mathbb{F}, \quad a_1= \frac{\zeta_{\mathbb{F}}^{(r_1)}(0)  }{(r_1)!}. 
$$
Further, utilizing \eqref{Laurent_1},  the Laurent series expansion of $ \zeta_{\mathbb{F}}(s)$, it yields that
\begin{align}\label{Laurent_Zeta_1}
s \zeta_{\mathbb{F}}(s+1)= H_{\mathbb{F}} + \gamma_{\mathbb{F}} s + O(s^2).  
\end{align}
Moreover,  we have
\begin{align}\label{Laurent_exp}
\left(- \frac{(2 \pi)^{r_1} i z}{D}  \right)^{-s} = 1 - s \log\left(- \frac{(2 \pi)^{r_1} i z}{D}  \right) + O(s^2).  
\end{align}
Now combining all the above Laurent series expansions \eqref{Laurent_gamma}-\eqref{Laurent_exp},  one can check that the coefficient of $s$ in the Laurent series expansion of $s^{2} \Gamma^{r_1} (s) \zeta_{\mathbb{F}}(s) \zeta_{\mathbb{F}}(s+1) (-2 \pi i z)^{-s}$ is $ a_0 \gamma_{\mathbb{F}} - a_0 H_{\mathbb{F}} \left\{ r_1 \gamma + \log\left(- \frac{(2 \pi)^{r_1} i z}{D}  \right) \right\}  + a_1 H_{\mathbb{F}}$,  which shows that
\begin{align*}
\mathfrak{R}_0(z)=  a_0 \gamma_{\mathbb{F}} - a_0 H_{\mathbb{F}} \left\{ r_1 \gamma + \log\left(- \frac{(2 \pi)^{r_1} i z}{D}  \right) \right\}  + a_1 H_{\mathbb{F}}.
\end{align*}
Finally,  substituting the above value of $\mathfrak{R}_0(z)$ in \eqref{totally real},  we complete the proof of \eqref{totally real field_k=0}. 

\end{proof}

\begin{proof}[Corollary {\rm \ref{eta identity}}][]
We know that $\zeta_{\mathbb{Q}}(s) = \zeta(s)$.   
Therefore,  considering $\mathbb{F} = \mathbb{Q}$,  
Corollary \ref{k=0 totally real field} reduces to 
\begin{align}\label{For F=Q}
 \mathfrak{F}_{\mathbb{Q}, 1}(z) - \mathfrak{F}_{\mathbb{Q}, 1}\left(-\frac{1}{z}\right) = \mathfrak{R}_1(z) - \mathfrak{R}_1 \left( -\frac{1}{z}\right) + \mathfrak{R}_0(z),
 \end{align}
 where 
 \begin{align*}
  \mathfrak{F}_{\mathbb{Q}, 1}(z) & =   \sum_{n=1}^{\infty}  \sigma_{\mathbb{Q}, -1}(n)V\left(-2\pi niz \bigg| \bar{0} \right) = \sum_{n=1}^{\infty}  \sigma_{-1}(n) \exp(2\pi i n z),  \\
  \mathfrak{R}_1(z)  & = \frac{H_{\mathbb{Q}} \zeta_{\mathbb{Q}}(2) i D}{ 2\pi  z} = \frac{i \pi}{12 z},    \\
  \mathfrak{R}_0(z) &=  a_0 \gamma_{\mathbb{Q}} - a_0 H_{\mathbb{Q}} \left\{ r_1 \gamma + \log(-2 \pi i z) \right\}  + a_1 H_{\mathbb{Q}}= \frac{1}{2}\log(-iz).  
 \end{align*}
The above simplified forms have been obtained by using the following well-known values:
$$
H_{\mathbb{Q}}=1,  \gamma_{\mathbb{Q}}=\gamma,  \zeta_{\mathbb{Q}}(2)=\frac{\pi^2}{6},  \\
 a_0 = \zeta(0)=-\frac{1}{2},  a_1= \zeta'(0)=-\frac{1}{2}\log(2\pi).   
$$
Eventually, putting the above values of $ \mathfrak{F}_{\mathbb{Q}, 1}(z),   \mathfrak{R}_1(z),    \mathfrak{R}_0(z) $ in \eqref{For F=Q} and using the fact that $n \sigma_{-1}(n)=\sigma(n)$,  one can finish the proof of \eqref{Equivalent eta identity}. 
\end{proof}

\section{Concluding Remarks}
Ramanujan's formula \eqref{Ramanujan's Formula} involves the following Lambert series
\begin{align}\label{Lambert series 1}
\sum_{n=1}^{\infty}\frac{n^{-k}}{e^{ny}-1} = \sum_{n=1}^{\infty}\sigma_{-k}(n)e^{-ny}, \quad \Re(y) > 0,
\end{align}
associated with the generalized divisor function $\sigma_{k}(n) = \sum_{d|n}d^{k}, k \in \mathbb{C}$.
Upon suitable change of variable, one can easily observe that the above series is exactly same as the following series: 
\begin{align}\label{Lambert series 2}
F_k(z) = \sum_{n=1}^{\infty}\sigma_{-k}(n)e^{2\pi inz}, \quad z \in \mathbb{H},
\end{align}
which is also present in Grosswald's identity \eqref{Grosswald identity}. 
Recently,  Banerjee, Gupta and Kumar \cite[Equation~(1.7)]{BGK23} have generalized the above Lambert series \eqref{Lambert series 1}, while obtaining a number field analogue of Ramanujan's identity,   in the following way:
$$
\sum_{\mathfrak{a} \subset \mathcal{O}_{\mathbb{K}}} \mathfrak{N}(\mathfrak{a})^k\Omega_{\mathbb{F}} \left(\frac{\mathfrak{N}(\mathfrak{a})y}{D}\right) =  \sum_{n=1}^{\infty}\mathtt{a}_{\mathbb{F}}(n)n^k\Omega_{\mathbb{F}}\left(\frac{ny}{D}\right),
$$
where the function $\Omega_{\mathbb{F}}(x) $,  which involves the Meijer $G$-function, is defined as
$$
\Omega_{\mathbb{F}}(x) := \frac{2^{1-r_1-r_2}}{\pi^{1-\frac{r_1}{2}}}\sum_{j=1}^{\infty}\mathtt{a}_{\mathbb{F}}(j)G_{0, 2d}^{d+1,0}\left(
\begin{array}{c}
- \\
(0)_{r_1+r_2}, \left(\frac{1}{2}\right)_{r_2+1};\left(\frac{1}{2}\right)_{r_1+r_2-1},(0)_{r_2}
\end{array}\bigg| \frac{x^2 j^2}{4^d}
\right),
$$
with $d = r_1 + 2r_2$ (the degree of the field $\mathbb{F}$ over $\mathbb{Q}$), and $D$ is the absolute value of the discriminant of $\mathbb{F}$.
In this paper,  we obtained a new number field analogue of Ramanujan's identity by 
generalizing the Lambert series $F_k(z)$, defined in \eqref{Lambert series 2},  in a different way than Banerjee et.  al.   Mainly, we studied modular transformation formula for the following infinite series: 
\begin{align*}
\mathfrak{F}_{\mathbb{F}, k}(z) = \sum_{n=1}^{\infty}\sigma_{\mathbb{F}, -k}(n)V\left(-\frac{(2\pi)^d niz}{D} \bigg| \bar{0}_d \right),
\end{align*}
where $\sigma_{\mathbb{F},k}(n)$ is the number field analogue \eqref{Gupta_Pandit} of the generalized divisor function $\sigma_k(n)$ and $V(z| \bar{0}_d)$ is the Steen function \eqref{Voronoi Steen function}.  
By obtaining a number field analogue of Ramanujan-Grosswald identity for odd zeta values, 
we are able to find a formula for Dedekind zeta function at odd arguments.
As an application our main identity i.e.,  Theorem \ref{DB},  we obtain many interesting modular transformation formulae that are true for totally real number fields and purely imaginary fields,  see Theorem \ref{k>0 and real number field},  Theorem \ref{k>0 and imaginary field}.  In particular,  for real quadratic fields and imaginary quadratic fields, we obtained modular transformation formulae,  i.e.,  Corollary \ref{Transformation formula for quadratic fields}, Corollary \ref{k<0 and imaginary field} that are perfect analogue of the transformation formula \eqref{Transformation2} for the Eisenstien series.  Further,  we found an exact evaluation of $\mathfrak{F}_{\mathbb{F}, 2k-1}(i)$ in terms of class number and the values of the Dedekind zeta function at negative odd integers,  see \eqref{Exact evaluation_totally real_at i},  \eqref{Exact evaluation for purely imag field} and \eqref{Exact evaluation_quad_imag_i}.   We also obtain a modular transformation formula which generalizes transformation formula for the Dedekind eta function $\eta(z)$,  see Theorem \ref{k=0 case}.  As one of the interesting applications of Theorem \ref{k=0 case},   we derived a formula \eqref{another formula for class number} for the class number of a totally real field which also indicates a relation with the Kronceker's limit formula for the Dedekind zeta function.  This observation might be of an independent interest to the readers of this article.

{\bf Acknowledgement} The authors would like to thank Prof.  Bruce Berndt for going through the manuscript and giving valuable suggestions.  The first author's research is supported by the Prime Minister Research Fellowship (PMRF),  Govt.  of India,  Grant No.  2102227. The last author wants to thank Science and Engineering Research Board (SERB),  India,   for giving MATRICS grant (File No. MTR/2022/000545) and SERB CRG grant (File No. CRG/CRG/2023/002122).  Both authors sincerely thank IIT Indore for providing conductive research environment.

\end{document}